\title{The dynamical Alekseevskii conjecture in dimension five}
\author{Maher Billon}
\date{\today}
\newtheorem{theorem}{Theorem}[section]
\newtheorem{lemma}[theorem]{Lemma}
\newtheorem{corollary}[theorem]{Corollary}
\newtheorem{prop}[theorem]{Proposition}
\newtheorem{claim}[theorem]{Claim}
\newtheorem{conjecture}[theorem]{Conjecture}
\begin{document}
\maketitle
\begin{abstract}
    We prove the dynamical Alekseevski conjecture in dimension five. We also provide a detailed analysis of the homogeneous Ricci flows on $SO(3)\ltimes \mathbb{R}^3/SO(2)$ and $SL(2,\mathbb{C})/U(1)$.
\end{abstract}
\section{Introduction}
The Ricci flow on a smooth Manifold is defined as the maximal solution to the partial differential equation \[
\frac{dg}{dt}=-2\mathrm{Ric}(g),
\]
where $g(t)$ denotes a time-dependent Riemannian metric and $\mathrm{Ric}(g)$ is its Ricci curvature. A solution is said to be immortal if it exists for all positive times $t$.
\par
On homogeneous spaces, the Ricci flow evolves within the finite-dimensional space of homogeneous metrics. This restriction simplifies the computations and has made the homogeneous setting a fruitful ground for investigating long term behavior of the flow. In this context, an alternative formulation known as the bracket flow [Lauret, \cite{Lauret2013}] has proven particularly effective. Using this framework, it was shown by Lafuente \cite{Lafuente2015} that if the universal cover of a homogeneous space is diffeomorphic to $\mathbb{R}^n$, then the Ricci flow is immortal.
\par The converse implication was conjectured by Böhm and Lafuente : 
\begin{conjecture}[Dynamical Alekseevski conjecture,  \cite{Boehm2018}] If a homogeneous space has an immortal Ricci flow, then its universal cover is diffeomorphic to $\mathbb{R}^n$.
\end{conjecture}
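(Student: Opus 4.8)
The plan is to prove the contrapositive by first reducing the topological conclusion to a purely Lie-algebraic condition on the pair $(G,H)$, and then showing dynamically that this condition is forced by immortality. Write the homogeneous space as $M=G/H$ with $G$ connected and acting almost effectively by isometries, so that $H$ is compact; fix a maximal compact subgroup $K$ of $G$ with $H\subseteq K$. By the Cartan--Iwasawa--Mostow theorem $G$ is diffeomorphic to $K\times\mathbb{R}^m$, so $G/K$ is diffeomorphic to $\mathbb{R}^m$ and the natural projection $G/H\to G/K$ is a fiber bundle with compact fiber $K/H$ over a contractible base. Such a bundle is trivial, whence $\widetilde{G/H}$ is diffeomorphic to $\widetilde{K/H}\times\mathbb{R}^m$. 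Since a compact homogeneous space has contractible universal cover if and only if it is diffeomorphic to a torus, we obtain the clean reduction: $\widetilde{M}\cong\mathbb{R}^n$ if and only if $K/H$ is a torus, equivalently if and only if $[\mathfrak{k},\mathfrak{k}]\subseteq\mathfrak{h}$ for a maximal compactly embedded subalgebra $\mathfrak{k}\subseteq\mathfrak{g}$. The conjecture is therefore equivalent to the dynamical assertion that if $[\mathfrak{k},\mathfrak{k}]\not\subseteq\mathfrak{h}$ then the homogeneous Ricci flow on $M$ is \emph{not} immortal.

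To attack the dynamical assertion I would pass to the bracket flow of \cite{Lauret2013}, which evolves the algebraic data $(\mathfrak{g},\mathfrak{h})$ together with the metric and is equivalent to the Ricci flow up to time-dependent scaling and diffeomorphism; immortality is preserved, and for every finite time the isomorphism type of $(\mathfrak{g},\mathfrak{h})$, hence the invariant $[\mathfrak{k},\mathfrak{k}]\cap\mathfrak{h}$, is unchanged. By the long-time theory of B\"ohm--Lafuente \cite{Boehm2018}, an immortal normalized solution has uniformly bounded normalized scalar curvature and subconverges, in the Cheeger--Gromov/bracket sense, to an expanding homogeneous Ricci soliton; by the structure theory of homogeneous Ricci solitons such limits are algebraic solitons carried by solvable groups, whose underlying manifolds are diffeomorphic to Euclidean space. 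The strategy is to show that a nonzero compact semisimple summand outside $\mathfrak{h}$ obstructs this convergence.

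Concretely, I would fix a Cartan-type decomposition $\mathfrak{g}=\mathfrak{k}\oplus\mathfrak{p}$ and set $\mathfrak{q}:=[\mathfrak{k},\mathfrak{k}]/\big([\mathfrak{k},\mathfrak{k}]\cap\mathfrak{h}\big)$, which is nonzero precisely in the obstructed case and is tangent to the non-abelian compact directions of $M$. On these directions the homogeneous Ricci tensor inherits a strictly positive contribution from the compact semisimple subalgebra $[\mathfrak{k},\mathfrak{k}]$, exactly as for a normal metric on a compact homogeneous space of positive scalar curvature. I would then isolate a scalar invariant of the flow adapted to $\mathfrak{q}$ --- for instance the smallest eigenvalue of the metric restricted to $\mathfrak{q}$, or the corresponding volume --- and aim to derive a differential inequality forcing it to vanish in finite time, producing a singularity and contradicting immortality, in direct analogy with the finite-time collapse of the Ricci flow on positively curved compact homogeneous spaces.

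The main obstacle is the \emph{coupling} between the compact semisimple directions $\mathfrak{q}$ and the complementary solvable and central-torus directions: the homogeneous Ricci tensor is not block diagonal with respect to $\mathfrak{k}\oplus\mathfrak{p}$, and the mixed terms can a priori feed curvature back into $\mathfrak{q}$ and defeat the naive contraction estimate. Gaining control of this mixing in arbitrary dimension --- equivalently, showing that the expanding-soliton limit cannot retain a positive-dimensional non-abelian compact isotropy quotient --- is precisely the step that resists the elementary arguments available in low dimensions and constitutes the heart of the difficulty. I would combine the monotonicity of the normalized scalar curvature along the flow \cite{Boehm2018} with a Lyapunov functional built from the moment map of the $K$-action, with the goal of proving that any immortal trajectory must asymptotically shed the summand $\mathfrak{q}$; since $[\mathfrak{k},\mathfrak{k}]\cap\mathfrak{h}$ is constant along the flow for all finite times, this is impossible unless $\mathfrak{q}=0$ from the outset, which yields the theorem.
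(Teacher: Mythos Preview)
The paper does not prove this statement; it is stated there as an open conjecture. The paper's contribution is only the five-dimensional case (Theorem~1.2), established by first classifying all simply connected five-dimensional homogeneous spaces, reducing to two exceptional cases $SO(3)\ltimes\mathbb{R}^3/SO(2)$ and $SL(2,\mathbb{C})/U(1)$, and then analysing the explicit Ricci flow ODEs on each by hand via ad hoc monotone quantities. There is therefore no proof in the paper to compare against.

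Your text is a research plan, not a proof. The topological reduction in your first paragraph to the condition $[\mathfrak{k},\mathfrak{k}]\subseteq\mathfrak{h}$ is reasonable, but from there on you only state intentions: you ``would'' pass to the bracket flow, you ``would'' isolate a scalar invariant adapted to $\mathfrak{q}$, you ``would'' build a Lyapunov functional ``with the goal of proving'' the key asymptotic claim. You name the obstacle yourself --- the coupling between the compact semisimple directions and the solvable/central directions in the homogeneous Ricci tensor --- and you do not overcome it; that coupling is precisely why the conjecture remains open beyond the cases already handled in \cite{Boehm2015} and \cite{Araujo2024}. Note also a specific gap in the limiting argument: subsequential bracket-flow convergence of an immortal solution to a solvable algebraic soliton does not contradict the original pair $(\mathfrak{g},\mathfrak{h})$ having $\mathfrak{q}\neq 0$, because the limit is an algebraic degeneration in which the Lie bracket structure can (and typically does) change. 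Your observation that the isomorphism type is constant for all finite times is correct but says nothing about the limit, so the step ``the flow must asymptotically shed $\mathfrak{q}$, hence $\mathfrak{q}=0$ from the outset'' has no mechanism behind it. The paper's five-dimensional argument bypasses all of this with explicit coordinate computations and does not suggest a route to the general statement.
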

It is in fact the "dynamical" version of the well known Alekseevski conjecture stating that a simply-connected homogeneous negative-Einstein space is diffeomorphic to $\mathbb{R}^n$. In that version, we replace a static condition $Ric(g)=\lambda g$ with a dynamic condition $\frac{dg}{dt}=-2\mathrm{Ric} (g)$. The two conjectures are clearly equivalent for Einstein spaces, on which the Ricci flow is given by $g(t)=(1-2\lambda t)g_0$.
\par An immediate corollary is an "all or nothing" law for homogeneous flows: either the space has a universal cover diffeomorphic to $\mathbb{R}^n$ and all the homogeneous flows are immortal, or all the homogeneous flows have a finite time extinction.
\par
Up to this date, the dynamical Alekseevski conjecture has been verified in several settings, including symmetric spaces, compact homogeneous spaces [Böhm, \cite{Boehm2015}], and cases where the isometry group $G$ has a compact normal semisimple subgroup [Araujo, \cite{Araujo2024}]. However, a full understanding of the long-time behaviour of homogeneous Ricci flows remains incomplete. In dimensions $\leq 4$, the conjecture has been fully addressed \cite{Araujo2024}. Dimension $5$ is the first setting where simply connected, non Euclidean homogeneous spaces exist that are not covered by previous results.
\par The main result of this paper is the following theorem:
\begin{theorem} In dimension $5$, the dynamical Alekseevski conjecture holds.
\end{theorem}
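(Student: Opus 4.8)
I would prove the contrapositive in two stages: first, classify the simply connected $5$-dimensional homogeneous spaces whose universal cover is \emph{not} diffeomorphic to $\mathbb{R}^5$; second, show that on each space on the resulting list every homogeneous Ricci flow has finite extinction time. Combined with Lafuente's theorem (quoted above), this settles the conjecture in dimension $5$.

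\textbf{Step 1: reduction.} Let $(M^5,g)$ be homogeneous with immortal Ricci flow. Passing to the universal cover, write $M=G/H$ with $G$ the identity component of the full isometry group. If $M$ is compact, Böhm's theorem \cite{Boehm2015} applies; if $G$ has a compact normal semisimple subgroup, Araujo's theorem \cite{Araujo2024} applies. So assume neither holds. Using the structure theory of homogeneous spaces together with the classification of transitive low-dimensional Lie group actions, I would reduce to a short list of candidate pairs $(G,H)$. The claim is that, modulo the already-settled cases (compact factors, Euclidean factors, and compact normal semisimple subgroups), the only simply connected non-Euclidean $5$-dimensional homogeneous spaces are $M_1=SO(3)\ltimes\mathbb{R}^3/SO(2)$ (diffeomorphic to $TS^2$) and $M_2=SL(2,\mathbb{C})/U(1)$ (equivalently $\widetilde{SO(3,1)_0}/SO(2)$). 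The point that makes these genuinely new is that in $SO(3)\ltimes\mathbb{R}^3$ the semisimple subgroup $SO(3)$ is \emph{not} normal — the normal subgroup is $\mathbb{R}^3$ — so \cite{Araujo2024} does not apply, and $SL(2,\mathbb{C})$ has no compact normal subgroup at all. Part of the work here is to dispatch the finitely many products and twisted products with compact or flat factors that could a priori appear.

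\textbf{Step 2: Ricci flow on $M_1$ and $M_2$.} For each space I would decompose the isotropy representation, parametrize the (two- or three-parameter) family of invariant metrics, compute $\mathrm{Ric}$ as a rational function of these parameters, and reduce the Ricci flow to an autonomous ODE system via Lauret's bracket flow \cite{Lauret2013}. The goal is to show every maximal solution is defined only on a finite time interval. For $M_1$ the mechanism is collapse of the $S^2$-base direction: the positively curved $SO(3)/SO(2)$ factor shrinks to zero in finite time, exactly as the round $S^2$ (or $S^2\times\mathbb{R}^3$) does, producing a singularity. For $M_2$ the analysis is more delicate because $G$ is noncompact semisimple and some directions of the metric want to expand.

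\textbf{Step 3: the main obstacle.} The hard case is $M_2=SL(2,\mathbb{C})/U(1)$: a naive collapse argument fails since the noncompact directions of $SL(2,\mathbb{C})$ push the flow toward expansion, and one must show the algebraic structure still forces finite-time blow-up. I expect to handle this by exhibiting an explicit Lyapunov-type function on the reduced ODE system — monotone along the flow and forcing some metric coefficient to reach $0$ (or some curvature quantity to blow up) in finite time — or by a direct qualitative phase-plane analysis of the reduced system, exploiting positivity of the curvature contributions coming from the compact isotropy and from the Killing form of $\mathfrak{sl}(2,\mathbb{C})$ restricted to the compact part. The second delicate point is verifying that the classification in Step 1 is complete, i.e.\ that no further simply connected non-Euclidean $5$-dimensional homogeneous space escapes both the previously known results and the two cases analyzed here.
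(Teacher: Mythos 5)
Your overall strategy coincides with the paper's: reduce, via classification, to the two exceptional spaces $SO(3)\ltimes\mathbb{R}^3/SO(2)$ and $SL(2,\mathbb{C})/U(1)$, and then show that every homogeneous Ricci flow on these develops a finite-time singularity. However, as written the proposal is a roadmap rather than a proof, and the two places where all the actual work lies are left open. First, the completeness of the classification in your Step 1 is asserted, not argued. The paper carries this out by fixing a Levi decomposition $\mathfrak{g}=\mathfrak{g}_{ss}\ltimes\mathfrak{r}$ with $\mathfrak{h}\subset\mathfrak{g}_{ss}$ and running a case-by-case analysis over $\dim\mathfrak{r}=0,\dots,5$, using dimension bounds on $\mathrm{Isom}(G_{ss}/H)$, the lemma that a semisimple algebra has no codimension-one compact subalgebra, the determination of which semidirect products $\mathfrak{so}(3)\ltimes\mathfrak{r}$ can occur, and (for $\dim\mathfrak{r}=0$) the Arroyo--Lafuente table; none of this is replaced by an argument in your text, and your normalization ($G$ the identity component of the full isometry group) would still require an equivalent case analysis.

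Second, and more seriously, Steps 2--3 contain no proof of finite-time extinction for either space. For $M_1$ your heuristic that ``the $S^2$ factor shrinks exactly as the round $S^2$ does'' ignores the essential difficulty the paper has to confront: the two $2$-dimensional isotropy summands $\mathrm{Vect}(c_1,c_2)$ and $\mathrm{Vect}(F,G)$ are isomorphic, so invariant metrics carry cross terms $\mu,\nu$; one can normalize $\mu=0$ by an isometry, but $\nu$ persists and couples the equations, and the paper must rule out immortality through a delicate study of the ratios $x=\beta/\alpha$ and $\epsilon=\tau/(\beta\gamma)$ (monotonicity of $\epsilon$, convergence of $x$, and case analysis on $x_\infty$ relative to $2+\sqrt3$), not by a product-collapse argument. (Also note $SO(3)\ltimes\mathbb{R}^3/SO(2)$ is $5$-dimensional and diffeomorphic to $S^2\times\mathbb{R}^3$, not to $TS^2$.) For $M_2$ you correctly flag this as the hard case, but ``I expect to handle this by exhibiting an explicit Lyapunov-type function\dots or by a direct qualitative phase-plane analysis'' is precisely the missing content: the paper's proof requires normalizing $\beta=\gamma$, showing $\mu\sim\sqrt\tau$ with $\mu/\sqrt\tau$ increasing, proving convergence of $x=\alpha/\beta$ and of $\epsilon$, and deriving a contradiction from the limiting growth rates in each regime $x_\infty>2$, $x_\infty<2$, $x_\infty=2$. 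Until concrete monotone quantities and the attendant asymptotic estimates are produced for both spaces, the conclusion of the theorem does not follow from what you have written.
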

\par Our strategy is to classify systematically all five-dimensional homogeneous spaces and to identify
the exceptional cases that fall outside known criteria (\textbf{Part 2}). We will isolate two such exceptional spaces: $SO(3)\ltimes \mathbb{R}^3/SO(2)$ and $SL(2,\mathbb{C})/U(1)$. For both cases, we will derive explicit Ricci flow equations and show that the flow develops a singularity in finite time, i.e, the flow is not immortal (\textbf{Parts 3 and 4}).
\par As a consequence, we will easily obtain a result on the topology of the space of positively curved metrics on $5-$dimensional homogeneous spaces (\textbf{Part 5}):
\begin{corollary}
    For a five-dimensional homogeneous space $M=G/H$, the set of homogeneous metrics with a positive scalar curvature $M^G_{\mathrm{Scal} > 0}$ is either empty or contractible.
\end{corollary}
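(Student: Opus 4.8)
The plan is to combine the contractibility of the space of invariant metrics with the monotonicity of scalar curvature along the Ricci flow, falling back on the classification of Part~2 for the finitely many exceptional spaces. First I would reduce to a cleaner statement: the set $M^G$ of $G$-invariant metrics on $M=G/H$ is the set of $\mathrm{Ad}(H)$-invariant inner products on the tangent space of $M$ at the base point, an open convex cone in the finite-dimensional vector space of all $\mathrm{Ad}(H)$-invariant symmetric bilinear forms there, hence contractible; the unit-volume slice $M^G_1$ is contractible too, since $g\mapsto\bigl(g/\mathrm{Vol}(g)^{2/n},\mathrm{Vol}(g)\bigr)$ is a diffeomorphism onto $M^G_1\times\mathbb{R}_{>0}$. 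Since $\mathrm{Scal}(cg)=c^{-1}\mathrm{Scal}(g)$ for $c>0$, the sign of $\mathrm{Scal}$ is constant along scalings, so under this diffeomorphism $M^G_{\mathrm{Scal}>0}\cong\mathcal{U}\times\mathbb{R}_{>0}$, where $\mathcal{U}:=M^G_1\cap\{\mathrm{Scal}>0\}$; it therefore suffices to show that $\mathcal{U}$ is empty or contractible.

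Next I would dispose of the Euclidean case. On a homogeneous space $\mathrm{Scal}$ is a spatial constant, so along the Ricci flow $\tfrac{d}{dt}\mathrm{Scal}=2|\mathrm{Ric}|^{2}\ge\tfrac{2}{n}\mathrm{Scal}^{2}$ by Cauchy--Schwarz; hence any metric $g$ with $\mathrm{Scal}(g)>0$ develops a singularity by time at most $n/\bigl(2\,\mathrm{Scal}(g)\bigr)$, and its Ricci flow is not immortal. By Lafuente's theorem \cite{Lafuente2015} a homogeneous space whose universal cover is diffeomorphic to $\mathbb{R}^{n}$ has immortal Ricci flow, so it carries no invariant metric of positive scalar curvature; that is, $\mathcal{U}=\emptyset$ (equivalently, by the main theorem the existence of such a metric forces the universal cover to be non-Euclidean).

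It then remains to treat the five-dimensional homogeneous spaces whose universal cover is \emph{not} $\mathbb{R}^{5}$; by the classification of Part~2 there are, up to covering, only finitely many of them, and for each I would write down $\mathrm{Scal}$ explicitly on $M^G$. For isotropy-irreducible spaces such as $S^{5}$ and the Wu manifold $SU(3)/SO(3)$ the slice $M^G_1$ is a point, so there is nothing to do. For metric products $S^{k}\times N$ with $N$ contractible homogeneous, for squashed spheres, and for $S^{2}\times S^{3}$, the scalar curvature is the standard sum of terms $a_i/x_i$ and of structure-constant terms in eigenvalue coordinates $x_i$, and after the substitution $x_i\mapsto\log x_i$ one checks that $\mathcal{U}$ is either all of $M^G_1$ or the region lying strictly below the graph of a continuous function over a contractible base, hence contractible. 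Finally, for the two exceptional spaces $SO(3)\ltimes\mathbb{R}^{3}/SO(2)$ and $SL(2,\mathbb{C})/U(1)$ I would read off $\mathcal{U}$ from the Ricci-flow and scalar-curvature formulas already derived in Parts~3 and~4 and verify contractibility directly.

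The main obstacle is that no uniform ``flow to a point'' argument is available. Along the volume-normalized Ricci flow on $M^G_1$ one has $\tfrac{d}{dt}\mathrm{Scal}=2\bigl(|\mathrm{Ric}|^{2}-\tfrac{1}{n}\mathrm{Scal}^{2}\bigr)\ge 0$, so $\mathcal{U}$ is forward invariant and $\mathrm{Scal}$ strictly increases along the flow (staying constant only at invariant Einstein metrics); but this flow typically degenerates at infinity (a sphere factor collapses), and the invariant Einstein metric --- which, as already on $S^{2}\times H^{3}$, need not even exist --- is an unstable fixed point, a repeller on $M^G_1$, exactly when a sphere factor is present. Hence the deformation retraction cannot be produced from the flow, and contractibility of $\mathcal{U}$ has to be read off from the explicit shape of $\{\mathrm{Scal}>0\}$ in each case. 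What makes the corollary ``easy'' is that the scalar-curvature computations this requires are either classical or have already been carried out in Parts~3--4 in the course of proving the main theorem.
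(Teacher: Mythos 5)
There is a genuine gap, and it sits exactly where you declare that ``no uniform flow-to-a-point argument is available.'' Your treatment of the Euclidean case is fine (and your Ricci-flow derivation of emptiness is a legitimate substitute for citing Berard-Bergery), but in the non-Euclidean case your plan reduces to an unexecuted case check: the classification in Part~2 is structural (products, spaces with a compact normal semisimple factor, symmetric spaces, two exceptional quotients), not a short explicit list of pairs $(G,H)$ with their full metric moduli, and for each such pair the claim that $\{\mathrm{Scal}>0\}\cap M^G_1$ is ``all of $M^G_1$ or the region below the graph of a continuous function over a contractible base'' is asserted, not proved. For $SO(3)\ltimes\mathbb{R}^3/SO(2)$ and $SL(2,\mathbb{C})/U(1)$ the scalar curvature involves the cross-terms $\mu,\nu$ and is not visibly of that sublevel-graph form, and ``verify contractibility directly'' is precisely the nontrivial content you would owe. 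So as written the proposal does not prove the corollary.

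The missed idea is that the main theorem \emph{does} supply a uniform flow argument, just with the unnormalized flow rather than the volume-normalized one you analyzed. If the universal cover is not $\mathbb{R}^5$, then by the main theorem every homogeneous Ricci flow dies in finite time, and by Lafuente's result the scalar curvature blows up to $+\infty$ at the singular time; since on a homogeneous space $\frac{d}{dt}\mathrm{Scal}=2|\mathrm{Ric}|^2\geq 0$, the first hitting time $t_g=\inf\{s:\mathrm{Scal}(g(s))\geq 1\}$ is finite and depends continuously on $g$, and $H(g,\alpha)=g(\alpha t_g)$ deformation-retracts the convex cone $M^G$ (contractible, as you note) onto $M^G_{\mathrm{Scal}\geq 1}$; the same retraction, applied only to metrics with $\mathrm{Scal}>0$, shows $M^G_{\mathrm{Scal}>0}\simeq M^G_{\mathrm{Scal}\geq 1}$, hence $M^G_{\mathrm{Scal}>0}$ is contractible with no case analysis at all. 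Your obstruction about the normalized flow degenerating and Einstein metrics being absent or unstable is real but irrelevant to this construction, since one never needs the flow to converge --- only the finite-time blow-up of $\mathrm{Scal}$ and its monotonicity.
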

\textit{Acknowledgments. }I'm grateful to my supervisor, Ramiro Lafuente, for his continuous support and helpful suggestions.
 \section{Classification of 5-dimensional homogeneous spaces}
The Ricci flow is immortal on $M$ if and only if it is immortal on $\tilde{M}$. So we will directly suppose that $M$ is simply connected. We write $M=G/H$ and we will assume some properties on this writing (that we can always assume without loss of generality):
\begin{itemize}
    \item[i)] $G$ doesn't contain a proper Lie subgroup acting transitively by isometries on $M$;
    \item[ii)] One Levi decomposition $G=G_{ss} \ltimes R$ is such that $H \subset G_{ss}$;
    \item[iii)] $G_{ss}$ acts effectively on $G_{ss}/H$;
    \item[iv)] $H$ is compact.
\end{itemize}
On the Lie algebra level, we have $\mathfrak{g} = \mathfrak{g}_{ss}\ltimes \mathfrak{r}$ and $\mathfrak{h} \subset \mathfrak{g}_{ss}$ with $codim_{\mathfrak{g}_{ss}}(\mathfrak{h}) + dim(\mathfrak{r}) = 5$. Notice that \[
M \cong G_{ss}/H \times R
\]
and simply connected solvable Lie groups are Euclidean. Therefore, if $G_{ss}/H$ is diffeomorphic to $\mathbb{R}^k$ then $M$ is diffeomorphic to $\mathbb{R}^5$. 
\begin{prop}
    Under the above assumptions, one of the following holds :
    \begin{itemize}
        \item $M$ is diffeomorphic to $\mathbb{R}^5$;
        \item $G$ has a semisimple normal compact subgroup;
        \item $M$ is symmetric;
        \item $M$ is a Riemannian product of two homogeneous spaces;
        \item $M = SL(2,\mathbb{C})/U(1)$ or $M=SO(3)\ltimes \mathbb{R}^3/SO(2)$ with a homogeneous metric.
    \end{itemize}
\end{prop}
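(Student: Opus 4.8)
The plan is a finite case analysis driven by the Levi factor $\mathfrak{g}_{ss}$, resting on one structural reduction. Fix a maximal compact subgroup $K_{ss}\subseteq G_{ss}$ containing $H$ (possible after conjugating, since $H$ is compact). The fibration $K_{ss}/H\hookrightarrow G_{ss}/H\twoheadrightarrow G_{ss}/K_{ss}$ has contractible base $G_{ss}/K_{ss}\cong\mathbb{R}^m$, hence is trivial, so $G_{ss}/H$ is diffeomorphic to $\mathbb{R}^m\times N$ with $N:=K_{ss}/H$ a compact homogeneous space; combined with the excerpt's identity $M\cong (G_{ss}/H)\times R$ this gives $M\cong\mathbb{R}^{m+\dim\mathfrak{r}}\times N$, whence $\pi_1(M)=\pi_1(N)$. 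Thus simple-connectedness of $M$ forces $N$ to be simply connected; if $N$ is a point we are in the first case ($M\cong\mathbb{R}^5$), and otherwise $2\le\dim N\le 5$ since there is no simply connected compact $1$-manifold. The whole argument then consists in showing a non-pointlike $N$ forces one of the remaining four conclusions, and a bonus of this reduction is that covering-group subtleties take care of themselves: if e.g.\ a factor $\widetilde{SL(2,\mathbb{R})}$ occurs, its maximal compact is trivial, so $N$ picks up nothing from it and we still land in the first case.

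First I extract the arithmetic constraints. Since $H$ is compact, $\dim(\mathfrak{g}_{ss}/\mathfrak{h})\ge\dim(G_{ss}/K_{ss})$, and the latter is either $0$ or at least $2$; the value $\dim(\mathfrak{g}_{ss}/\mathfrak{h})=1$ cannot occur either, as $G_{ss}/H$ would then be a line or circle, and no semisimple group acts transitively on such with compact stabiliser unless it is trivial; and $\dim(\mathfrak{g}_{ss}/\mathfrak{h})=0$ forces $\mathfrak{g}_{ss}=0$ by effectiveness (iii), hence $M=R\cong\mathbb{R}^5$. So we may assume $\dim(\mathfrak{g}_{ss}/\mathfrak{h})\ge 2$, i.e.\ $\dim\mathfrak{r}\le 3$; note also that (iii) says precisely that $\mathfrak{h}$ contains no simple ideal of $\mathfrak{g}_{ss}$. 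Now if some compact simple factor $\mathfrak{s}$ of $\mathfrak{g}_{ss}$ acts trivially on $\mathfrak{r}$, then $\mathfrak{s}$ commutes with $\mathfrak{r}$ and with the other factors, so it is an ideal of $\mathfrak{g}$ and the associated compact normal subgroup puts us in the second case. Hence we may assume every compact simple factor acts nontrivially on $\mathfrak{r}$; as $\dim\mathfrak{r}\le 3$ and $\mathfrak{so}(3)$ is the only compact simple Lie algebra with a nontrivial real representation of dimension $\le 3$ (namely $\mathbb{R}^3$), there is at most one such factor, it equals $\mathfrak{so}(3)$, and $\mathfrak{r}\cong\mathbb{R}^3$ with the standard action, forcing $\dim\mathfrak{r}=3$ and $\dim(\mathfrak{g}_{ss}/\mathfrak{h})=2$. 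A further simple factor $\mathfrak{s}'$ is ruled out: if $\mathfrak{s}'$ is compact it would also have to act nontrivially on the $3$-dimensional $\mathfrak{r}$, impossible since $\mathfrak{so}(4)$ has no faithful $3$-dimensional representation; if $\mathfrak{s}'$ is noncompact, $\dim(G_{ss}/K_{ss})\ge 2$ forces $\mathfrak{h}=\mathfrak{k}_{ss}\supseteq\mathfrak{so}(3)$, contradicting (iii). Thus $\mathfrak{g}_{ss}=\mathfrak{so}(3)$, $\mathfrak{h}=\mathfrak{so}(2)$ and $M=SO(3)\ltimes\mathbb{R}^3/SO(2)$, the fifth case.

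It remains to treat $\mathfrak{g}_{ss}$ semisimple with no compact simple factor. Then $G_{ss}/K_{ss}$ is a product of irreducible noncompact symmetric spaces of dimension $\ge 2$ and total dimension $\le 5$, so $\mathfrak{g}_{ss}$ is simple — one of $\mathfrak{sl}(2,\mathbb{R})$, $\mathfrak{sl}(2,\mathbb{C})$, $\mathfrak{so}(4,1)$, $\mathfrak{su}(2,1)$, $\mathfrak{so}(5,1)$, $\mathfrak{sl}(3,\mathbb{R})$ — or equals $\mathfrak{sl}(2,\mathbb{R})^{\oplus 2}$ or $\mathfrak{sl}(2,\mathbb{R})\oplus\mathfrak{sl}(2,\mathbb{C})$. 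For each of these I enumerate the compact subalgebras $\mathfrak{h}$ (up to conjugacy) with $\dim(\mathfrak{g}_{ss}/\mathfrak{h})\le 5$, together with the possible $G_{ss}$-invariant solvable complements $\mathfrak{r}$. Whenever $\mathfrak{h}=\mathfrak{k}_{ss}$ one has $N=\mathrm{pt}$, hence $M\cong\mathbb{R}^5$; these cases include the genuine symmetric spaces ($SL(3,\mathbb{R})/SO(3)$, $\mathbb{H}^5$, \dots) and the genuine Riemannian products ($\mathbb{CH}^2\times\mathbb{R}$, $\mathbb{H}^2\times\mathbb{H}^2\times\mathbb{R}$, \dots), which one may alternatively assign to the third or fourth case after checking — via vanishing of the relevant $\operatorname{Hom}_H$ between isotropy summands — that the invariant metric has no mixed terms. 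When $\mathfrak{h}\subsetneq\mathfrak{k}_{ss}$, requiring $N=K_{ss}/H$ to be simply connected, together with $\dim(\mathfrak{g}_{ss}/\mathfrak{h})\le 5$, eliminates every possibility except $\mathfrak{g}_{ss}=\mathfrak{sl}(2,\mathbb{C})$ with $\mathfrak{h}=\mathfrak{so}(2)$ (so $\mathfrak{r}=0$ and $N=SU(2)/SO(2)=S^2$), i.e.\ $M=SL(2,\mathbb{C})/U(1)$ — the fifth case; for instance $SU(2,1)/SU(2)$ is discarded because there $N=U(1)=S^1$ is not simply connected.

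The main obstacle is making the enumeration in the last step both complete and correct. One must list without omission all noncompact simple real Lie algebras whose symmetric space has dimension $\le 5$ (and the two admissible reducible cases); one must classify the compact subalgebras of each and the compatible invariant complements $\mathfrak{r}$, being careful to track which real group $G_{ss}$ (linear, adjoint, or a nontrivial cover) actually occurs, which is pinned down precisely by the requirement that $N$ be simply connected; and if one wants the crisp labels ``symmetric'' and ``Riemannian product'' rather than merely ``diffeomorphic to $\mathbb{R}^5$'' for the boring cases, one must verify in each that the $\mathrm{Ad}(H)$-invariant inner product on $\mathfrak{m}$ has no off-diagonal block, which amounts to the isotropy representations of the putative factors sharing no common irreducible constituent. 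None of this is deep, but it is where an error would most naturally hide.
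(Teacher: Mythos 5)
Your route is genuinely different from the paper's and is sound in outline. The paper argues case by case on $\dim\mathfrak{r}$, bounding $\dim \mathrm{Isom}(G_{ss}/H)$ in low codimension and quoting the Arroyo--Lafuente classification for $\dim\mathfrak{r}=0$; you instead fibre $G_{ss}/H$ over the contractible $G_{ss}/K_{ss}$, reduce everything to the compact fibre $N=K_{ss}/H$, and let simple connectedness of $M$ discard the circle-fibre presentations ($SU(2,1)/SU(2)$, $\Delta_{p,q}SO(2)$, the covers of $SL(2,\mathbb{R})$), which the paper instead handles by passing to universal covers. This buys a clean dichotomy --- either $N$ is a point, hence $M\cong\mathbb{R}^5$ with no need for the symmetric or product bullets, or $N$ is a simply connected compact homogeneous space of dimension at least $2$ --- and it replaces the paper's solvable-ideal lemma on codimension-one compact subalgebras by the observation that $\dim(\mathfrak{g}_{ss}/\mathfrak{h})=1$ is incompatible with effectiveness.

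Two places where you assert what the paper proves. First, in the $\mathfrak{so}(3)$ branch you write ``$\mathfrak{r}\cong\mathbb{R}^3$ with the standard action'', but a priori $\mathfrak{r}$ is only a $3$-dimensional solvable algebra on which $\mathfrak{so}(3)$ acts by derivations through the standard representation; you still have to show it is abelian, which is exactly the content of the paper's lemma on the semidirect product. The argument is short (the bracket is an equivariant map $\Lambda^2\mathbb{R}^3\to\mathbb{R}^3$, hence by Schur a multiple of the cross product, and a nonzero multiple gives $\mathfrak{so}(3)$, which is not solvable), but it is a needed step, since the fifth bullet specifically means the Euclidean semidirect product. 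Second, the final enumeration (noncompact simple factors whose symmetric space has dimension at most $5$, their compact subalgebras with $\dim(\mathfrak{g}_{ss}/\mathfrak{h})\le 5$, and simple connectedness of $K_{ss}/H$) is claimed rather than carried out. It does check out: only $\mathfrak{so}(2)\subset\mathfrak{sl}(2,\mathbb{C})$ survives, since $\mathfrak{su}(2)\subset\mathfrak{u}(2)$ gives $N=S^1$, while $\mathfrak{so}(4)$, $\mathfrak{so}(5)$, $\mathfrak{so}(3)\subset\mathfrak{sl}(3,\mathbb{R})$ and $\mathfrak{so}(2)\oplus\mathfrak{su}(2)\subset\mathfrak{sl}(2,\mathbb{R})\oplus\mathfrak{sl}(2,\mathbb{C})$ leave no proper subalgebra of the allowed codimension, and the remaining $S^1$-fibre cases violate simple connectedness. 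But, as you acknowledge, this finite check is where the actual content of the proposition lies, so it must be written out for the proof to be complete.
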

We will go through the cases according to the dimension of $r$.
\subsection{Dim(r)=5}
In that case, $H=G_{ss}$ so by \textbf{iii}, $dim(G_{ss})=0$ and $M=G=R$ so $M\cong \mathbb{R}^5$.
\subsection{Dim(r)=4}
In that case, $codim_{\mathfrak{g}_{ss}}(\mathfrak{h}) = 1$. It is impossible as shown by the following lemma :
\begin{lemma}
    If $\mathfrak{g}_{ss}$ is a semisimple Lie algebra, then it cannot contain a codimension$-1$ compact embedded subalgebra.
\end{lemma}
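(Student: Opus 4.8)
The plan is to analyze the one–dimensional $\mathfrak h$-module $\mathfrak g_{ss}/\mathfrak h$ attached to a hypothetical codimension-one subalgebra $\mathfrak h\subset\mathfrak g_{ss}$. Since $\mathfrak h$ is a subalgebra, $\mathrm{ad}$ leaves $\mathfrak h$ invariant and hence descends to a representation of $\mathfrak h$ on $\mathfrak g_{ss}/\mathfrak h\cong\mathbb R$; this representation is a linear functional $\chi\in\mathfrak h^{*}$, automatically vanishing on $[\mathfrak h,\mathfrak h]$ because $\mathbb R$ is abelian. I would then split into the two cases $\chi=0$ and $\chi\neq0$.

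The case $\chi=0$ is immediate and does not even use the compactness hypothesis: it says $[\mathfrak g_{ss},\mathfrak h]\subseteq\mathfrak h$, i.e.\ $\mathfrak h$ is an ideal of $\mathfrak g_{ss}$. By semisimplicity we get a decomposition $\mathfrak g_{ss}=\mathfrak h\oplus\mathfrak a$ into ideals with $\dim\mathfrak a=1$; but a one-dimensional Lie algebra is abelian, so $\mathfrak a$ is a nonzero abelian (in particular solvable) ideal, contradicting the semisimplicity of $\mathfrak g_{ss}$.

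For the case $\chi\neq0$ I would bring in the compactness of $\mathfrak h$ in the following form: since $\mathrm{Ad}(H)$ is relatively compact in $\mathrm{Aut}(\mathfrak g_{ss})$, an averaging argument produces an inner product on $\mathfrak g_{ss}$ for which every $\mathrm{ad}(Y)$ with $Y\in\mathfrak h$ is skew-symmetric; in particular each such operator is semisimple with purely imaginary spectrum. Now pick $X\in\mathfrak h$ with $\chi(X)=1$ (possible since $\chi\neq0$; one may even take $X$ in the centre of $\mathfrak h$, which is where $\chi$ is supported). Because $\mathrm{ad}(X)$ preserves $\mathfrak h$ and acts on $\mathfrak g_{ss}/\mathfrak h$ by the scalar $\chi(X)=1$, its characteristic polynomial on $\mathfrak g_{ss}$ is divisible by $(t-1)$, so $1$ is an eigenvalue of $\mathrm{ad}_{\mathfrak g_{ss}}(X)$ — contradicting that this operator has only imaginary eigenvalues. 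This finishes the proof.

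I expect the only point requiring care to be the precise reading of ``compact embedded subalgebra'' and the elementary but crucial observation that it forces the $\mathrm{ad}$-operators of $\mathfrak h$ on $\mathfrak g_{ss}$ to be semisimple with purely imaginary eigenvalues; everything else is a one-line computation. As a sanity check, and as an essentially equivalent alternative, one can argue geometrically: $H$ compact means $G_{ss}/H$ carries a $G_{ss}$-invariant Riemannian metric, a one-dimensional homogeneous Riemannian manifold is $\mathbb R$ or a circle, whose isometry group has abelian identity component, and effectiveness of the $G_{ss}$-action then makes $\mathfrak g_{ss}$ abelian — impossible for a semisimple algebra of positive dimension (and the statement is vacuous when $\mathfrak g_{ss}=0$, which has no codimension-one subalgebra).
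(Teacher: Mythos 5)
Your proof is correct, but it is organized differently from the paper's. The paper uses compactness exactly once, to produce an $\mathrm{ad}(\mathfrak h)$-invariant complement $\mathfrak p=\mathrm{Vect}(x)$ of $\mathfrak h$ (the orthogonal complement for an averaged invariant inner product): since $[\mathfrak p,\mathfrak p]=0$ and $[\mathfrak h,\mathfrak p]\subseteq\mathfrak p$, the line $\mathfrak p$ is a one-dimensional, hence solvable, ideal of $\mathfrak g_{ss}$, contradicting semisimplicity --- two lines, no case distinction. You instead analyze the character $\chi$ of $\mathfrak h$ on the quotient module $\mathfrak g_{ss}/\mathfrak h$ and split into $\chi=0$ (then $\mathfrak h$ is a codimension-one ideal, complemented by a one-dimensional abelian ideal; even more directly, $[\mathfrak g_{ss},\mathfrak g_{ss}]\subseteq\mathfrak h$ would contradict perfectness) and $\chi\neq 0$ (skew-symmetry of $\mathrm{ad}(X)$ for the averaged inner product versus the real eigenvalue $\chi(X)=1$ read off the block-triangular form). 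Both arguments hinge on the same use of compactness, namely an invariant inner product making $\mathrm{ad}(\mathfrak h)$ skew-symmetric; the paper's invariant-complement trick disposes of your case $\chi\neq0$ in one stroke (a skew-symmetric operator on an invariant line acts by zero, so the quotient character is automatically trivial) and then lands in the same ``solvable ideal'' contradiction as your first case. Your spectral argument is an equally elementary and valid substitute. One small caveat: the geometric ``sanity check'' you append needs the $G_{ss}$-action on $G_{ss}/H$ to be effective (or you must first pass to the quotient by the largest ideal of $\mathfrak g_{ss}$ contained in $\mathfrak h$), an assumption not present in the purely algebraic statement of the lemma; since it is only a side remark, this does not affect the correctness of your main proof.
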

\begin{proof}
    If $k$ is a $1$-codimensional compact subalgebra, then it has an $ad(k)-$ invariant complementary space $p=Vect(x)$. But then $p$ is a solvable ideal, contradiction.
\end{proof}
\subsection{Dim(r)=3}
In that case, $codim_{G_{ss}}(H) = 2$ so $G_{ss}/H$ is a two-dimensional Riemannian homogeneous space i.e it is homothetic to $\mathbb{S}^2$, $\mathbb{R}^2$ or $\mathbb{H}^2$ with their canonical metrics.
\begin{itemize}
    \item If it's $\mathbb{R}^2$ or $\mathbb{H}^2$ then $M\cong \mathbb{R}^5$.
    \item If it's $\mathbb{S}^2$, then by \textbf{iii} $\mathfrak{g}_{ss} \hookrightarrow Lie(Isom(\mathbb{S}^2)) = \mathfrak{so}(3)$ is injective. Therefore $g_{ss}$ is a semisimple subalgebra of $\mathfrak{so}(3)$, so it has to be $\mathfrak{so}(3)$ because of the dimensions. In that case, $\mathfrak{h}=\mathfrak{so}(2)$ is the whole isotropy group. So $\mathfrak{g} = \mathfrak{so}(3) \ltimes \mathfrak{r}$ with a specific semidirect product, $\mathfrak{h}=\mathfrak{so}(2)\subset \mathfrak{so}(3)$ and $M = SO(3)\ltimes R/SO(2)$.
\end{itemize}
    \begin{lemma}
        Either $\mathfrak{so}(3) \ltimes \mathfrak{r}$ is a direct product, or $\mathfrak{r}=\mathbb{R}^3$ and the semidirect product is the standard semidirect product $\mathfrak{so}(3) \ltimes \mathbb{R}^3$.
    \end{lemma}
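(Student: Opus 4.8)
The plan is to encode the semidirect product by the corresponding Lie algebra homomorphism $\phi\colon \mathfrak{so}(3)\to\mathrm{Der}(\mathfrak{r})$ and exploit that $\mathfrak{so}(3)$ is simple and compact. First, since $\mathfrak{so}(3)$ is simple, $\phi$ is either zero or injective. If $\phi=0$, then $\mathfrak{so}(3)\ltimes\mathfrak{r}$ is a direct product and there is nothing more to prove. So I would assume $\phi$ is injective and show that this forces $\mathfrak{r}=\mathbb{R}^3$ with the standard action. Composing $\phi$ with the tautological action of $\mathrm{Der}(\mathfrak{r})$ on the underlying vector space makes $\mathfrak{r}$ a $3$-dimensional \emph{faithful} real representation of $\mathfrak{so}(3)$.

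The second step is purely representation-theoretic. Since $\mathfrak{so}(3)$ is compact, the representation on $\mathfrak{r}$ is completely reducible, and the irreducible real representations of $\mathfrak{so}(3)$ have dimensions $1,3,5,\dots$; in particular there is no $2$-dimensional one. Hence a $3$-dimensional real representation of $\mathfrak{so}(3)$ is either trivial or the irreducible $3$-dimensional vector representation $V$. Faithfulness of $\phi$ rules out the trivial case, so $\mathfrak{r}\cong V$ as an $\mathfrak{so}(3)$-module, with $\mathfrak{so}(3)$ acting by infinitesimal rotations; up to conjugation this identifies $\phi$ with the standard inclusion $\mathfrak{so}(3)\hookrightarrow\mathfrak{gl}(3,\mathbb{R})$.

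It remains to determine the bracket of $\mathfrak{r}$. Because $\mathfrak{so}(3)$ acts by derivations, the bracket $[\cdot,\cdot]\colon\Lambda^2\mathfrak{r}\to\mathfrak{r}$ is an $\mathfrak{so}(3)$-equivariant map. Using the isomorphisms $\Lambda^2 V\cong V\cong V^{\ast}$ of $\mathfrak{so}(3)$-modules (Hodge star, i.e. the cross product) together with Schur's lemma for the irreducible real representation $V$, one gets that $\mathrm{Hom}_{\mathfrak{so}(3)}(\Lambda^2 V,V)$ is one-dimensional, spanned by the cross product. Therefore $[u,v]=c\,(u\times v)$ for some $c\in\mathbb{R}$. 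If $c\neq 0$, then $[\mathfrak{r},\mathfrak{r}]=\mathfrak{r}$, so $\mathfrak{r}$ is perfect and hence not solvable, contradicting that $\mathfrak{r}$ is the radical of $\mathfrak{g}$. Thus $c=0$, so $\mathfrak{r}=\mathbb{R}^3$ is abelian and $\mathfrak{so}(3)\ltimes\mathfrak{r}$ is the standard semidirect product $\mathfrak{so}(3)\ltimes\mathbb{R}^3$.

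The argument is soft, so there is no serious obstacle; the only point requiring care — the ``hard part'' in spirit — is the representation-theoretic bookkeeping: one must use that $\mathfrak{so}(3)\cong\mathfrak{su}(2)$ has no $2$-dimensional \emph{real} irreducible representation (the $2$-dimensional complex one is quaternionic, with a $4$-dimensional realification), so a faithful $3$-dimensional real representation is automatically irreducible, and that the space of equivariant brackets on $V$ really is only one-parameter. As an alternative one could instead run through the Bianchi classification of $3$-dimensional solvable Lie algebras and check directly which admit $\mathfrak{so}(3)$ inside their derivation algebra, but the module-theoretic route above is shorter and uniform.
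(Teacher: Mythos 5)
Your proof is correct and follows essentially the same route as the paper: split according to whether the action of $\mathfrak{so}(3)$ on $\mathfrak{r}$ is trivial (direct product) or the standard $3$-dimensional irreducible one, then use the derivation property together with solvability of the radical to force $\mathfrak{r}$ to be abelian. The only difference is cosmetic: where the paper checks by an explicit basis computation that the brackets $[c_i,c_j]$ are constrained and then killed by solvability, you package the same step as Schur's lemma applied to the equivariant map $\Lambda^2 V\to V$, identifying the bracket with a multiple of the cross product --- a slightly cleaner way to reach the same conclusion.
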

    \begin{proof}
        The representation of $\mathfrak{so}(3)$ involved in the semidirect product is either trivial or the standard $3$-dimensional irreducible representation of $\mathfrak{so}(3)$. In the first case, the semidirect product is a direct product. In the second case, we have a vector basis $(e_1, e_2, e_3)$ of  $r$ such that $\mathfrak{so}(3)$ acts like 
        \[
E = \begin{pmatrix}
0 & 1 & 0 \\
-1 & 0 & 0 \\
0 & 0 & 0
\end{pmatrix}
\quad
F = \begin{pmatrix}
0 & 0 & 1 \\
0 & 0 & 0 \\
-1 & 0 & 0
\end{pmatrix}
\quad
G = \begin{pmatrix}
0 & 0 & 0 \\
0 & 0 & 1 \\
0 & -1 & 0
\end{pmatrix}
\]
Then we use that $\mathfrak{so}(3)$ must act as derivations of $R$ :
$$E[c_1,c_2] = [Ec_1, c_2] + [c_1, Ec_2] = 0$$
$$F[c_1,c_3] = [Fc_1, c_3] + [c_1, Fc_3] = 0$$
$$G[c_2,c_3] = [Gc_2, c_3] + [c_2, Gc_3] = 0$$
So $[c_1,c_2] \in Vect(c_3)$, $[c_2,c_3] \in Vect(c_1)$, $[c_3,c_1] \in Vect(c_2)$. But $\mathfrak{r}$ is solvable, so one of the three brackets must vanish. Without loss of generality, let's say $[c_2,c_3] = 0$. Then :
$$E[c_1,c_3] = -[c_2,c_3] + [c_1, Ec_3]= 0$$
so $[c_1,c_3]=0$ and same $[c_1,c_2]=0$. Finally, $r = \mathbb{R}^3$.
    \end{proof}
    In the first case, $G$ contains a normal compact semisimple subgroup. In the second case, $M=SO(3)\ltimes \mathbb{R}^3/SO(2)$ with the standard semidirect product.
\subsection{Dim(r)=2}
In that case, $codim_{\mathfrak{g}_{ss}}(\mathfrak{h})=3$. By \textbf{iii}, $\mathfrak{g}_{ss}\hookrightarrow Lie(Isom(G_{ss}/H))$ is injective. We also know that $dim(Isom(G_{ss}/H)) \leq \frac{3\cdot 4}{2} = 6$. Moreover, either $G_{ss}/H$ has a constant sectional curvature (and therefore is isometric to $\mathbb{R}^3, \mathbb{H}^3$ or $\mathbb{S}^3$ up to homothety) or $dim(Isom(G_{ss}/H)) \leq \frac{3\cdot 2}{2}+1 = 4$. By looking at the semisimple Lie algebras of dimension less than $6$  we have the following cases :
\begin{itemize}
    \item If $\mathfrak{g}_{ss} = \mathfrak{so}(3)$ then $\mathfrak{h}$ is trivial and $M = SU(2) \ltimes R$ with a homogeneous metric. The only representation of $\mathfrak{so}(3)$ of dimension less than $3$ is the trivial representation, so $M=SU(2)\times R$ and $G$ has a normal semisimple compact subgroup.
    \item If $\mathfrak{g}_{ss} = \mathfrak{sl}(2,\mathbb{R})$ then $\mathfrak{h}$ is trivial and $M=\tilde{SL(2,R)} \ltimes R$ so $M\cong \mathbb{R}^5$.
    \item If not, $dim(\mathfrak{g}_{ss}) = 6$ so we are in the equality case and $G_{ss}/H$ is isometric to $\mathbb{R}^3, \mathbb{H}^3$ or $\mathbb{S}^3$ with their natural metrics (up to homothety). In the two first cases, $M\cong \mathbb{R}^5$. In the last case, $\mathfrak{g}_{ss}\hookrightarrow Lie(Isom(G_{ss}/H))$ is one-to-one so $g_{ss} = \mathfrak{so}(4)=\mathfrak{so}(3)+\mathfrak{so}(3)$. Same as before, the semidirect product is a direct product. But then $G$ has a normal compact semisimple subgroup $SO(4)$.
\end{itemize}
\subsection{Dim(r)=1}
In that case, $codim_{\mathfrak{g}_{ss}} (\mathfrak{h}) = 4$ and $\mathfrak{r} = \mathbb{R}$. The only $1$-dimensional representation of a semisimple Lie algebra is the trivial one (because the kernel of the representations contains $[g_{ss},g_{ss}]=g_{ss}$), so $G = G_{ss}\times R$.
Just like before, $g_{ss}\hookrightarrow Lie(Isom(G_{ss}/H))$ is injective and $dim(Isom(G_{ss}/H)) \leq \frac{4\cdot 5}{2} = 10$. Moreover, either $G_{ss}/H$ has a constant sectional curvature (and therefore is isometric to $\mathbb{R}^4, \mathbb{H}^4$ or $\mathbb{S}^4$) or $dim(Isom(G_{ss}/H)) \leq 9$.
\begin{itemize}
    \item If $dim(G_{ss})=10$ then $G_{ss}/H$ is either $\mathbb{R}^4, \mathbb{H}^4$ or $\mathbb{S}^4$. In the two first cases, $M$ is diffeomorphic to $\mathbb{R}^5$. In the last case, $G_{ss}=SO(5)$ so $G$ has a compact normal subgroup .
    \item If $dim(G_{ss}) = 9$ then $\mathfrak{g}_{ss}$ is a sum of $\mathfrak{so}(3)$, $\mathfrak{sl}(2,R)$, $\mathfrak{sl}(2,C)$ and $dim(\mathfrak{h})=5$. But the dimension of a maximal compact subgroup of $so(3)$ or $sl(2,R)$ is $1$, and the dimension of a maximal compact subgroup of $\mathfrak{sl}(2,\mathbb{C})$ is $3$. So $\mathfrak{h}$ cannot be $5-$dimensional.
    \item If $dim(G_{ss}) = 8$ then $\mathfrak{g}_{ss}=\mathfrak{su}(2,1)$ or $\mathfrak{sl}(3,R)$ and $dim(\mathfrak{h})=4$. But the dimension of a maximal compact subgroup of $\mathfrak{sl}(3,R)$ is $3$, so this case is not possible. Moreover, the maximal compact subgroup of $SU(2,1)$ is $S(U(2)\times U(1))$ and has dimension $4$. So in that case $M$ is Euclidean.
    \item The remaining possibilities are $\mathfrak{g}_{ss} = \mathfrak{so}(3)+\mathfrak{so}(3)$, $\mathfrak{so}(3)+\mathfrak{sl}(2,R)$, $\mathfrak{sl}(2,R)+\mathfrak{sl}(2,R)$, $\mathfrak{sl}(2,C)$. If $\mathfrak{so}(3)$ is involved we are in the case where $G_{ss}$ has a compact normal semisimple subgroup. If $g_{ss} = sl(2,R)+sl(2,R)$ then $H$ is a maximal compact subgroup of $G_{ss}$ so $M$ is Euclidean. If $g_{ss} = sl(2,C)$ then we can suppose $\mathfrak{h}\subset \mathfrak{su}(2)$ (maximal compact subgroup of $SL(2,C)$). But $\mathfrak{su}(2)$ is simple so it cannot have a codimension$-1$ compact subalgebra (\textbf{lemme} 
 \textbf{2.1}). Contradiction.
\end{itemize}
\subsection{Dim(r)=0}
In that case, $G=G_{ss}$ and $M=G_{ss}/H$. 
\begin{itemize}
    \item If $G_{ss}$ has a compact factor in its decomposition in simple groups, then $G$ has a compact normal semisimple subgroup.
    \item If $M$ is a Riemannian product of two homogeneous spaces, then the proposition is true.
    \item If $M$ is symmetric, then the proposition is true.
    \item From the work of Arroyo and Lafuente [\cite{Arroyo2017}, Table 1], we get the remaining cases $M=SL(2,C)/U(1)$, $M=SL(2,R)\times SL(2,R)/\Delta_{p,q}SO(2)$ and $M=SU(2,1)/SU(2)$. For the last one, $SU(2)$ has codimension $1$ in a maximal compact subgroup of $SU(2,1)$ so $M\cong \mathbb{R}^5$. For $SL(2,R)\times SL(2,R)/\Delta_{p,q}SO(2)$, the space is clearly diffeomorphic to $\mathbb{R}^5$ because the maximal compact subgroup is $SO(2)\times SO(2)$ and $SO(2)\times SO(2)/\Delta_{p,q}SO(2)$ has a universal cover diffeomorphic to $\mathbb{R}$. The last interesting case is $M=SL(2,C)/U(1)$ with a homogeneous metric.
\end{itemize}
\section{First special case : $SO(3)\ltimes R^3/SO(2)$}
\subsection{Lie algebra structure}
We use the unique $3$-dimensional representation of $\mathfrak{ so}(3)$, with basis :
\[
E = \begin{pmatrix}
0 & 1 & 0 \\
-1 & 0 & 0 \\
0 & 0 & 0
\end{pmatrix}
\quad
F = \begin{pmatrix}
0 & 0 & 1 \\
0 & 0 & 0 \\
-1 & 0 & 0
\end{pmatrix}
\quad
G = \begin{pmatrix}
0 & 0 & 0 \\
0 & 0 & 1 \\
0 & -1 & 0
\end{pmatrix}
\]
We note $c_1, c_2, c_3$ the canonical basis of $\mathbb{R}^3$.
Then the Lie algebra $g$ is generated by $(E,F,G,c_1, c_2, c_3)$ with bracket specified in the following table.
\begin{center}
\begin{tabular}{c|cccccc}
$[\quad]$ & $E$ & $c_3$ & $c_1$ & $c_2$ & $F$ & $G$ \\
\hline
$E$ & $0$ & $0$ & $-c_2$ & $c_1$ & $-G$ & $F$ \\
$c_3$ & $0$ & $0$ & $0$ & $0$ & $-c_1$ & $-c_2$\\
$c_1$ & $c_2$ & $0$ & $0$ & $0$ & $c_3$ & $0$\\
$c_2$ & $-c_1$ & $0$ & $0$ & $0$ & $0$ & $c_3$ \\
$F$ & $G$ & $c_1$ & $-c_3$ & $0$ & $0$ & $-E$ \\
$G$ & $-F$ & $c_2$ & $0$ & $-c_3$ & $E$ & $0$

\end{tabular}
\end{center}
In this setting we can assume that the isotropy subalgebra is $\mathfrak{so}(2) = Vect(E)$. Moreover, $p = Vect(F,G,c_1, c_2, c_3)$ is an $ad(E)$-invariant complement of $Vect(E)$ in $g$.\\
It is easy to show that the decomposition of $p$ in $\mathfrak {so}(2)$ irreducible representations is $p = Vect(c_3) + Vect(c_1,c_2) + Vect(F,G)$. The problem making this case hard to solve is that the two $2$-dimensional representations are isomorphic, so cross-terms will appear in the metric and the Ricci tensor.\\
It will be useful to have the killing form on $p$ :
\begin{center}
\begin{tabular}{c|ccccc}
B & $c_3$ & $c_1$ & $c_2$ & $F$ & $G$ \\
\hline
$c_3$ & $0$ & $0$ & $0$ & $0$ & $0$\\
$c_1$ & $0$ & $0$ & $0$ & $0$ & $0$\\
$c_2$ & $0$ & $0$ & $0$ & $0$ & $0$ \\
$F$ & $0$ & $0$ & $0$ & $-4$ & $0$ \\
$G$ & $0$ & $0$ & $0$ & $0$ & $-4$

\end{tabular}
\end{center}
Finally, we see that $tr \circ ad = 0$ so $G$ is unimodular.
\subsection{Homogeneous metrics}
We want to characterize all the the homogeneous metrics on $SO(3)\ltimes \mathbb{R}^3/SO(2)$. Such a metric correspond to an $\mathrm{Ad}(SO(2))-$invariant scalar product on $p$. Using this invariance and the irreducible decomposition of $p$, we get that the scalar product in the basis $(c_3, c_1, c_2, F,G)$ has the following form :
\[
g = \begin{pmatrix}
    \alpha & 0 & 0 & 0 & 0 \\
    0 & \beta & 0 & \mu & \nu \\
    0 & 0 & \beta & -\nu & \mu \\
    0 & \mu & -\nu & \gamma & 0 \\
    0 & \nu & \mu & 0 & \gamma \\
\end{pmatrix},
\]
with $\alpha, \beta, \gamma > 0 $.
As a matter of fact, we can get rid of the parameter $\mu$.
\begin{lemma}
    Every homogeneous metric on $SO(3)\ltimes \mathbb{R}^3/SO(2)$ is isometric to a homogeneous metric corresponding to a scalar product on $p$ where $\mu=0$. 
\end{lemma}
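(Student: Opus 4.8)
The plan is to absorb the parameter $\mu$ by means of an automorphism of $\mathfrak g$ that fixes the isotropy subalgebra $\mathfrak h = Vect(E)$. Inspecting the bracket table one has $[c_3,E]=[c_3,c_1]=[c_3,c_2]=0$ together with $[c_3,F]=-c_1$ and $[c_3,G]=-c_2$, so $ad(c_3)^2=0$ on all of $\mathfrak g$. Hence, for every $s\in\mathbb R$, the map
\[
\psi_s := \mathrm{Ad}(\exp(sc_3)) = \mathrm{Id} + s\, ad(c_3)
\]
is an automorphism of $\mathfrak g$ that fixes $E,c_1,c_2,c_3$ and sends $F\mapsto F-sc_1$ and $G\mapsto G-sc_2$. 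In particular $\psi_s(\mathfrak h)=\mathfrak h$, and $\psi_s$ even preserves $p$; moreover $\exp(sc_3)$ centralizes $H=SO(2)$, so $\psi_s$ descends to a diffeomorphism $\bar\Psi_s$ of $M=G/H$ fixing the base point, with differential there equal to $\psi_s|_p$ under $p\cong\mathfrak g/\mathfrak h$. Consequently $\bar\Psi_s^{*}g$ is again an $\mathrm{Ad}(SO(2))$-invariant, hence homogeneous, metric, and it is isometric to $g$.

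I would then transport the scalar product through $\psi_s$. Using $\psi_s(c_i)=c_i$, $\psi_s(F)=F-sc_1$, $\psi_s(G)=G-sc_2$ and the given block form of $g$, a short computation shows that $\bar\Psi_s^{*}g$ has exactly the same shape as $g$ — the $(c_3,\cdot)$ entries and the $(F,G)$ entry stay zero — with the parameters $(\alpha,\beta,\gamma,\mu,\nu)$ replaced by $(\alpha,\ \beta,\ \gamma-2s\mu+s^{2}\beta,\ \mu-s\beta,\ \nu)$. Taking $s=\mu/\beta$, which is legitimate since $\beta>0$, produces a homogeneous metric isometric to $g$ whose cross parameter vanishes, $\mu-s\beta=0$; its parameter $\gamma$ becomes $\gamma-\mu^{2}/\beta$, which is automatically positive since $\bar\Psi_s^{*}g$ is a genuine scalar product (equivalently, positive definiteness of $g$ forces $\beta\gamma>\mu^{2}+\nu^{2}$). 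This gives the desired conclusion.

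The computations involved are routine; the only step that needs care is the claim that $\bar\Psi_s$ is an honest isometry between the two homogeneous spaces. This rests on the standard fact that an automorphism of $\mathfrak g$ preserving $\mathfrak h$ induces a diffeomorphism of $G/H$ carrying one invariant metric to another, together with the observation that here $\psi_s$ centralizes $H$, so $\mathrm{Ad}(H)$-invariance of the transported metric is automatic. Conceptually this is precisely why $\mu$ is a redundant parameter: the feature noted earlier — that the two $2$-dimensional $\mathfrak{so}(2)$-summands $Vect(c_1,c_2)$ and $Vect(F,G)$ are isomorphic — is exactly what makes the shear $\psi_s$ available, and this shear cancels the real part of the cross-term, leaving only the genuinely essential parameter $\nu$.
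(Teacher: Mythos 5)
Your proof is correct and is essentially the same as the paper's: both use the inner automorphism $\mathrm{Ad}(\exp(sc_3))=\mathrm{Id}+s\,\mathrm{ad}(c_3)$, which fixes $SO(2)$ and hence descends to $G/H$, and both compute that the pulled-back scalar product replaces $(\gamma,\mu)$ by $(\gamma-2s\mu+s^{2}\beta,\ \mu-s\beta)$ while leaving the other entries' shape intact, so $s=\mu/\beta$ kills $\mu$. Your added remarks (that $\mathrm{ad}(c_3)^2=0$, that the map centralizes $H$, and that positive definiteness keeps the new $\gamma$ positive) are just slightly more explicit versions of the steps the paper takes.
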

\begin{proof}
    Let $x = \mathrm{exp}(tc_3)$ and $\alpha_t$ the conjugation by $x$ in our Lie group. We know that $(d\alpha_t)_e = Ad(tc_3) = \mathrm{exp}(ad(tc_3))$ is the identity on $Vect(E)$ so $\alpha_t (\mathrm{exp}(\lambda E)) = \mathrm{exp}(\lambda E) \forall \lambda$ so $\alpha_t$ is the identity on $SO(2)^0 = SO(2)$. Therefore, $\alpha_t$ induces $\tilde{\alpha_t}$ on the quotient. Clearly, $\tilde{\alpha_t}$ is a diffeomorphism. Besides, $\tilde{\alpha_t}^*g $ is a $G$-invariant metric on $G/H$. Finally, the corresponding scalar product on $p$ is given by $\tilde{g} = g(Ad(tc_3)\cdot, Ad(tc_3)\cdot)$.\\
    We compute in the same basis :
    \[
    ad(tc_3) = \begin{pmatrix}
        0 & 0 & 0 & 0 & 0 \\
        0 & 0 & 0 & -t & 0 \\
        0 & 0 & 0 & 0 & -t \\
        0 & 0 & 0 & 0 & 0 \\
        0 & 0 & 0 & 0 & 0 \\
    \end{pmatrix}
    \quad 
    \mathrm{Ad}(\mathrm{exp}(tc_3)) = \begin{pmatrix}
        1 & 0 & 0 & 0 & 0 \\
        0 & 1 & 0 & -t & 0 \\
        0 & 0 & 1 & 0 & -t \\
        0 & 0 & 0 & 1 & 0 \\
        0 & 0 & 0 & 0 & 1 \\
    \end{pmatrix}
    \]
    \[
    \tilde{g} = \begin{pmatrix}
        \alpha & 0 & 0 & 0 & 0 \\
        0 & \beta & 0 & \mu-t\beta & \nu \\
        0 & 0 & \beta & -\nu & \mu-t\beta \\
        0 & \mu-t\beta & -\nu & \gamma + t^2\beta - 2t\mu & 0 \\
        0 & \nu & \mu-t\beta & 0 & \gamma + t^2\beta - 2t\mu \\
    \end{pmatrix}
    \]
    Then choose $t=\frac{\mu}{\beta}$ to finish the proof.
\end{proof}
We now have to find an orthonormal basis for $g$. We introduce $\tau = \mu^2 + \nu^2$. Let's suppose that $\tau > 0$. We define 
\[
\lambda_{\pm} = \frac{\beta+\gamma}{2} \pm \frac{1}{2} \sqrt{4\tau + (\beta - \gamma)^2}
\]
These are the roots of the equation $(\lambda - \beta)(\lambda - \gamma) = \tau$ and are distinct from $\beta, \gamma$ by assumption.\\
Let's introduce \[
H^{\pm} = \begin{pmatrix}
0\\
    \lambda_{\pm} - \gamma \\
    0\\
    \mu \\
    \nu
\end{pmatrix}
\quad 
B^{\pm} = \begin{pmatrix}
0\\
    0 \\
    \lambda_{\pm} - \gamma\\
    -\nu \\
    \mu
\end{pmatrix}
\]
Then $(H^{\pm}, B^{\pm})$ are eigenvectors of $g$ associated to eigenvalues $\lambda_{\pm}$ (simple computation). It is then straightforward to show that $(\frac{c_3}{\sqrt{\alpha}},\frac{H^{\pm}}{f_{\pm}}, \frac{B^{\pm}}{f_{\pm}})$ is an orthonormal basis for $g$, where \[
f_{\pm} = \sqrt{\lambda_{\pm} \cdot (\tau + (\lambda_{\pm} - \gamma)^2)}
\]
\subsection{Ricci tensor}
We know from [\cite{Besse1987}, Corollary 7.38] that if $(X_i)$ is an orthonormal basis of $p$ for $g$ then the Ricci tensor in the unimodular case is given by :
\[
Ric(X,Y) = -\frac{1}{2}B(X,Y) - \frac{1}{2}\sum_i g([X,X_i],[Y,X_i]) + \frac{1}{4}\sum_{i,j}g([X_i, X_j],X)g([X_i,X_j],Y)
\]
With that formula and simplifications with the $\lambda_{\pm}$, we are able to express the full Ricci tensor (which has the same form as the metric $g$):
\[
Ric(c_3, c_3) = \frac{\alpha^2 - \beta^2}{\beta\gamma - \tau} + \frac{2\alpha^2 \nu^2}{(\beta\gamma - \tau)^2}
\]
\[
Ric(c_1, c_1) = Ric(c_2, c_2) = \frac{\beta}{2\alpha}\frac{\beta^2 - \alpha^2}{\beta\gamma - \tau}
\]
\[
Ric(F,F) = Ric(G,G) = 2 - \frac{\beta}{\alpha} + \frac{\gamma}{2\alpha}\frac{\beta^2 - \alpha^2}{\beta\gamma - \tau}
\]
\[
Ric(c_1, F) = Ric(c_2, G) = \frac{\mu}{2\alpha (\beta\gamma - \tau)} (\beta^2 - \alpha^2)
\]
\[
Ric(c_1, G) = -Ric(c_2, F) = \frac{\nu }{2\alpha (\beta\gamma - \tau)} (\alpha^2 + \beta^2)
\]
\subsection{Extinction time of the Ricci flow}
The Ricci flow equation is $\frac{dg}{dt} = -2Ric(g)$ i.e
\[
\begin{cases}
\displaystyle \dot{\alpha} = 2\frac{\beta^2 - \alpha^2}{\beta\gamma - \tau} - \frac{4\alpha^2 \nu^2}{(\beta\gamma - \tau)^2} \\
\displaystyle \dot{\beta} = \frac{\beta}{\alpha}\frac{\alpha^2 - \beta^2}{\beta\gamma - \tau} \\
\displaystyle \dot{\gamma} = -4 + 2\frac{\beta}{\alpha} + \frac{\gamma}{\alpha} \frac{\alpha^2 - \beta^2}{\beta\gamma - \tau} \\
\displaystyle \dot{\mu} = \frac{\mu}{\alpha (\beta\gamma - \tau)}(\alpha^2 - \beta^2) \\
\displaystyle \dot{\nu} = -\frac{\nu}{\alpha(\beta\gamma - \tau)}(\alpha^2 + \beta^2)
\end{cases}
\]
We will now prove the main theorem of this section.
\begin{theorem}
    Any homogeneous Ricci flow on $SO(3)\ltimes \mathbb{R}^3/SO(2)$ has a finite time extinction.
\end{theorem}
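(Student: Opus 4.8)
The plan is to assume the flow is immortal, i.e.\ defined on $[0,\infty)$, and derive a contradiction. By Lemma~3.3 and the fact that isometric initial metrics produce Ricci flows with the same maximal existence time, we may assume $\mu_0=0$; the ODE system then forces $\mu\equiv 0$, so we work with the reduced system in $(\alpha,\beta,\gamma,\nu)$ with $\tau=\nu^2$ and $D:=\beta\gamma-\nu^2>0$. Recall the standard identity on a homogeneous space, $\frac{d}{dt}\mathrm{Scal}=2|\mathrm{Ric}|_g^2\ge\frac{2}{5}\mathrm{Scal}^2$. No homogeneous metric on $SO(3)\ltimes\mathbb{R}^3/SO(2)$ is Ricci-flat: this is visible from the formulas above, since $\mathrm{Ric}(c_1,G)\ne 0$ when $\nu\ne 0$, while for $\nu=0$ vanishing of $\mathrm{Ric}(c_3,c_3)$ forces $\alpha=\beta$ and then $\mathrm{Ric}(F,F)=1\ne 0$. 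Hence $\mathrm{Scal}$ is strictly increasing, so if it were ever $\ge 0$ a Riccati comparison would give a finite-time singularity. Under the immortality assumption we therefore have $\mathrm{Scal}(g(t))<0$ for all $t\ge 0$.

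The crucial step is an algebraic rewriting. Put $s:=\log(\beta/\alpha)$ and $q:=\nu^2/D\ge 0$, so that $D=\beta\gamma/(1+q)$. A computation then gives
\[
\mathrm{Scal}=-\frac{(1+q)e^{s}}{\gamma}\,P(s,q),\qquad P(s,q):=(e^{-s}-2)^2+2qe^{-2s}-3,
\]
together with $\dot\gamma=e^{s}P(s,q)-2q\cosh s$ and $\dot s=\frac{\alpha}{D}\bigl(3+4q-3e^{2s}\bigr)$. Two monotonicities tame the system. First, a direct calculation yields
\[
\frac{d}{dt}\log q=-\frac{2\alpha^{2}+2(\alpha-\beta)^{2}}{\alpha D}-\frac{4\alpha\nu^{2}}{D^{2}}<0,
\]
so $q$ is strictly decreasing; in particular $q\le q_0$ and $\frac{d}{dt}\log q\le-2\alpha/D$. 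Second, from the formula for $\dot s$ we see that $\dot s<0$ whenever $s>\tfrac12\log(1+\tfrac43 q)$ and $\dot s>0$ whenever $s<0$; since $q$ decreases, barrier arguments trap $s$ in the fixed bounded interval $[\min(s_0,0),\,\max(s_0,\tfrac12\log(1+\tfrac43 q_0))]$.

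With $s$ and $q$ bounded, $\dot\gamma=e^{s}P(s,q)-2q\cosh s$ is uniformly bounded, so $\gamma(t)\le\gamma_0+Ct$; hence $\alpha/D=(1+q)e^{-s}/\gamma\ge c/(\gamma_0+Ct)$ and $\int_0^\infty(\alpha/D)\,dt=\infty$. Feeding this into $\frac{d}{dt}\log q\le-2\alpha/D$ gives $q(t)\to 0$. Next I would show $s(t)\to 0$: when $\nu\ne 0$ one has $\dot s|_{s=0}=4q\alpha/D>0$, so $s$ can only cross $0$ upward, and this together with $\tfrac12\log(1+\tfrac43 q)\to 0$ and the infinite attraction budget $\int(\alpha/D)\,dt=\infty$ forces $s\to 0$; for $\nu\equiv 0$ the quantity $s$ is driven monotonically to $0$ by $\dot s=-\tfrac{6}{\gamma}\sinh s$ combined with the same divergence. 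But then $P(s,q)\to(1-2)^2-3=-2<0$, so $\mathrm{Scal}=-\frac{(1+q)e^{s}}{\gamma}P(s,q)$ is positive for all large $t$, contradicting $\mathrm{Scal}<0$. Hence the flow is not immortal.

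The main obstacle is the last paragraph: controlling the coupled asymptotics ($q\to 0$ and $s\to 0$) of the reduced system. Upstream, the decisive insights are that $\nu^{2}/D$ is monotone and that both $\mathrm{Scal}$ and $\dot\gamma$ admit the clean expressions above in the variables $(s,q,\gamma)$; once these are found, the remaining ingredients (the reduction to $\mu=0$, the scalar-curvature Riccati argument, and the crude bound $\dot\gamma=O(1)$) are routine.
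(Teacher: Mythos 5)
Your proposal is correct, and while it shares the paper's backbone it closes the argument by a genuinely different mechanism. Like the paper you reduce to $\mu\equiv 0$ and study the ratio $\beta/\alpha$ (your $s=\log(\beta/\alpha)$ is the paper's $x$) together with an off-diagonal ratio (your $q=\nu^2/D$ equals $\epsilon/(1-\epsilon)$ for the paper's $\epsilon=\tau/\beta\gamma$), and your monotonicity of $\log q$ is exactly the paper's claim that $\epsilon$ is strictly decreasing; I checked your identities for $\mathrm{Scal}$, $\dot\gamma$, $\dot s$ and $\tfrac{d}{dt}\log q$ against the paper's flow equations and they are all correct (e.g.\ $\mathrm{Scal}=\tfrac{4\alpha\beta-\alpha^2-\beta^2}{\alpha D}-\tfrac{2\alpha\nu^2}{D^2}$ agrees with your $-\tfrac{(1+q)e^{s}}{\gamma}P(s,q)$; incidentally your computation shows the discriminant displayed in the paper's claim on $\dot\epsilon$ should be $-4(1-\epsilon^2)$, a harmless typo since both are negative). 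Where you differ is the endgame: the paper proves $x$ converges and runs a case analysis on $x_\infty$ (thresholds $1$ and $2+\sqrt3$), contradicting positivity of $\gamma$, $\gamma/\beta$ or $\beta$, with $\tau=0$ treated separately; you instead use the homogeneous blow-up criterion ($\tfrac{d}{dt}\mathrm{Scal}=2|\mathrm{Ric}|^2\ge\tfrac{2}{5}\mathrm{Scal}^2$ plus the absence of Ricci-flat metrics, so immortality forces $\mathrm{Scal}<0$), trap $s$ by barriers, bound $\dot\gamma$ to get $\gamma\le\gamma_0+Ct$ and hence $\int_0^\infty(\alpha/D)\,dt=\infty$, deduce $q\to 0$ and $s\to 0$, and conclude $\mathrm{Scal}>0$ eventually, a contradiction. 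Your route buys a unified treatment of $\nu\equiv 0$ and replaces the paper's case distinctions with clean asymptotics, at the cost of importing the scalar-curvature Riccati fact; the paper is more self-contained but longer. The one step you should write out in full is ``$s\to 0$'': for each $\delta>0$, once $\tfrac12\log(1+\tfrac43 q)<\delta$ the level $s=\delta$ can only be crossed downward and $s=-\delta$ only upward, while $\dot s\le -c_\delta\,\alpha/D$ on $\{s\ge\delta\}$ and $\dot s\ge c_\delta\,\alpha/D$ on $\{s\le-\delta\}$, so divergence of $\int\alpha/D$ forces $s$ into $(-\delta,\delta)$ permanently; this is routine given the estimates you already have.
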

First thing to notice is that if $\mu = 0$ at $t=0$ then $\mu=0$ at any time due to the differential equation. Combined with the observations in Section $2$, we can therefore assume $\mu (t)=0$ and $\tau = \nu^2$.\\
Let's suppose that the Ricci flow above is immortal. We should arrive to some contradiction. There are two relevant ratios to analyze, which are $x = \frac{\beta}{\alpha}$ and $\epsilon = \frac{\tau}{\beta \gamma}$. The constraints on these variables are $x>0$ and $\epsilon \in [0,1)$.
\begin{claim}
    If $\tau = 0$ then the flow has a finite time extinction.
\end{claim}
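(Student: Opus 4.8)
The plan is to argue by contradiction: assume the homogeneous Ricci flow with $\tau=0$ is immortal, and derive a contradiction. The starting point is that $\tau=0$ forces $\mu=\nu=0$, and since $\dot\mu$ and $\dot\nu$ are proportional to $\mu$ and $\nu$ respectively, the condition $\tau=0$ persists; the metric stays diagonal with eigenvalues $(\alpha,\beta,\beta,\gamma,\gamma)$. Introduce $x=\beta/\alpha$. From the flow equations with $\tau=0$, namely $\dot\alpha=2(\beta^2-\alpha^2)/(\beta\gamma)$, $\dot\beta=(\alpha^2-\beta^2)/(\alpha\gamma)$ and $\dot\gamma=-4+\beta/\alpha+\alpha/\beta$, one checks directly that $\alpha\beta^2$ is conserved along the flow and that $(x,\gamma)$ obeys the closed planar system
\[
\dot x=\frac{3(1-x^2)}{\gamma},\qquad\qquad \dot\gamma=x+\frac1x-4 .
\]
Immortality of the flow is equivalent to this system having a solution on $[0,\infty)$ with $x(t),\gamma(t)>0$ for all $t$ (then $\alpha,\beta$ are recovered from $x$ and $\alpha\beta^2=\mathrm{const}$), so it suffices to exclude that.

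Next I would extract two structural facts. First, an immortal homogeneous Ricci flow must have $\mathrm{Scal}(g(t))\le0$ for all $t$: since $\mathrm{Scal}$ is spatially constant on a homogeneous space, the evolution identity $\frac{d}{dt}\mathrm{Scal}=2|\mathrm{Ric}|^2$ together with $|\mathrm{Ric}|^2\ge\tfrac15\mathrm{Scal}^2$ (Cauchy--Schwarz, $\dim=5$) shows a positive value of $\mathrm{Scal}$ would blow up in finite time. Computing the scalar curvature from the Ricci components with $\tau=0$ gives $\mathrm{Scal}=\tfrac1\gamma\bigl(4-x-\tfrac1x\bigr)$, so $\mathrm{Scal}\le0$ is equivalent to $x+\tfrac1x\ge4$, i.e. $x(t)\in(0,\,2-\sqrt3\,]\cup[\,2+\sqrt3,\,\infty)$ for all $t$. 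These two intervals are disjoint, so $x$ stays in a single one, and by the symmetry $x\mapsto 1/x$ of the system I may assume $x(t)\le2-\sqrt3$ for all $t$. In particular $x<1$, so $\dot x>0$: $x$ is strictly increasing and converges to some $L\le2-\sqrt3$ as $t\to\infty$, and moreover $x(0)<2-\sqrt3$ strictly (otherwise $\dot x(0)>0$ would immediately push $x$ above $2-\sqrt3$). Finally $\dot\gamma=x+\tfrac1x-4\ge0$, so $\gamma$ is non-decreasing.

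Then I close the argument with two incompatible bounds on $\int_0^\infty dt/\gamma(t)$. Since $x(t)\le2-\sqrt3$, we have $1-x(t)^2\ge1-(2-\sqrt3)^2=4\sqrt3-6>0$, so $\dot x\ge 3(4\sqrt3-6)/\gamma$; integrating over $[0,\infty)$ and using that the total increase of $x$ equals $L-x(0)<1$, we get $\int_0^\infty\frac{dt}{\gamma(t)}\le\frac1{3(4\sqrt3-6)}<\infty$. On the other hand, $g(x)=x+\tfrac1x-4$ is decreasing on $(0,1)$ and $x(t)\ge x(0)$, so $\dot\gamma=g(x(t))\le g(x(0))$, which is a strictly positive constant because $x(0)<2-\sqrt3$; hence $\gamma(t)\le\gamma(0)+g(x(0))\,t$ grows at most linearly, and $\int_0^\infty\frac{dt}{\gamma(t)}\ge\int_0^\infty\frac{dt}{\gamma(0)+g(x(0))\,t}=\infty$. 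This contradiction shows the flow cannot be immortal, which is the claim.

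The main obstacle is precisely this last balancing act. Assuming only immortality, one cannot rule out a priori that $\gamma\to\infty$, and if $\gamma$ grew fast enough the forced convergence of $x$ (hence finiteness of $\int\dot x$, hence of $\int 1/\gamma$) would be no obstruction at all. What saves the argument is that $\dot\gamma$ is just $g$ evaluated along $x(t)$, while $x(t)$ is confined to a compact subinterval of $(0,1)$ bounded away from $0$; hence $\dot\gamma$ is bounded, $\gamma$ grows at most linearly, and linear growth is exactly too slow for $\int 1/\gamma$ to converge. Pinning down this bound and noticing that the two estimates on $\int 1/\gamma$ collide is the crux; the remaining points — persistence of $\tau=0$, the reduction to the planar system, the sign of $\mathrm{Scal}$, and the strictness $x(0)<2-\sqrt3$ — are routine.
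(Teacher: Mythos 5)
Your proof is correct, but it takes a genuinely different route from the paper's. The paper works directly with $(\alpha,\beta,\gamma)$: it shows $\alpha$ and $\beta$ are monotone (unless identically equal) and converge to positive limits, then splits into cases according to whether $\beta_\infty/\alpha_\infty$ lies below $2$, in $[2,2+\sqrt3)$, or at least $2+\sqrt3$, in each case driving $\gamma/\beta$, $\gamma$, or $\beta$ to $-\infty$. You instead reduce to the closed planar system for $(x,\gamma)$ via the conserved quantity $\alpha\beta^2$ (a structural observation absent from the paper), import the standard fact that an immortal homogeneous Ricci flow has $\mathrm{Scal}\le 0$ (from $\tfrac{d}{dt}\mathrm{Scal}=2|\mathrm{Ric}|^2\ge\tfrac25\mathrm{Scal}^2$), compute $\mathrm{Scal}=(4-x-1/x)/\gamma=-\dot\gamma/\gamma$ to pin $x$ into $(0,2-\sqrt3]\cup[2+\sqrt3,\infty)$, exploit the $x\mapsto 1/x$ symmetry of the reduced system, and then obtain the contradiction from two incompatible estimates on $\int_0^\infty dt/\gamma$ (finite because $x$ is increasing, bounded, with $\dot x\ge 3(4\sqrt3-6)/\gamma$; infinite because $\dot\gamma=g(x(t))\le g(x(0))$ gives at most linear growth of $\gamma$). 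I verified $\dot x=3(1-x^2)/\gamma$, $\dot\gamma=x+1/x-4$, the conservation of $\alpha\beta^2$, the scalar curvature formula, and the $x\mapsto1/x$ invariance; all are correct, and the integral comparison closes the argument. What your route buys: no case analysis and no asymptotic expansions of the type "$\gamma/\beta\sim ct$" used in the paper, the degenerate possibility $\alpha\equiv\beta$ is subsumed automatically (since $x=1$ forces $\mathrm{Scal}>0$), and the identity $\mathrm{Scal}=-\dot\gamma/\gamma$ makes transparent where the thresholds $2\pm\sqrt3$ in the paper's proof come from. The cost is the appeal to the external (though completely standard, and in the spirit of the paper's citation of Lafuente's scalar-curvature results) fact about the sign of $\mathrm{Scal}$ along immortal homogeneous flows, whereas the paper's proof is self-contained ODE analysis of the original system.
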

\begin{proof}
In that case, $\mu = \nu = 0$.
Clearly, if $\beta = \alpha$ at a certain time then they remain equal forever. If this never happens, then $\alpha$ and $\beta$ are monotonic because their derivatives have the sign of $\pm (\beta-\alpha)$ and are null if and only if $\alpha=\beta$. Therefore $\alpha, \beta$ converge to constants $\alpha_{\infty}, \beta_{\infty}>0$. Note that $\dot{\gamma} = -4 + \frac{\alpha}{\beta} + \frac{\beta}{\alpha}$ and  $\frac{d}{dt} (\frac{\gamma}{\beta}) = \frac{2}{\beta} (\frac{\beta}{\alpha} - 2)$. So if $\frac{\beta_{\infty}}{\alpha_{\infty}} < 2$ then $\frac{\gamma}{\beta} \rightarrow -\infty$. If $2\leq \frac{\beta_{\infty}}{\alpha_{\infty}} < 2+\sqrt{3}$ then $\gamma \rightarrow {-\infty}$. If $\frac{\beta_{\infty}}{\alpha_{\infty}} \geq 2+\sqrt{3}$ then $\frac{\gamma}{\beta} \sim \frac{2}{\beta_{\infty}}(\frac{\beta_{\infty}}{\alpha_{\infty}}- 2)t$ so $\dot{\beta} \sim -\frac{\alpha_{\infty} (\frac{\beta_{\infty}}{\alpha_{\infty}}^2 - 1)}{2(\frac{\beta_{\infty}}{\alpha_{\infty}} - 2)t}$ so $\beta \rightarrow -\infty$. In each case we come to a contradiction.
\end{proof}
Now we can assume $\tau(t)>0$.
\begin{claim}
        $\epsilon$ is strictly decreasing.
\end{claim}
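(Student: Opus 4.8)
The plan is to pass to the logarithmic derivative of $\epsilon = \tau/(\beta\gamma) = \nu^2/(\beta\gamma)$ (recall that in this regime $\mu = 0$, so $\tau = \nu^2$, and $\tau(t) > 0$). Since $\epsilon > 0$, the sign of $\dot\epsilon$ is the sign of $\frac{2\dot\nu}{\nu} - \frac{\dot\beta}{\beta} - \frac{\dot\gamma}{\gamma}$, so I would substitute the right-hand sides of the flow equations into this combination. Writing $D = \beta\gamma - \tau > 0$ for the common denominator appearing in the Ricci tensor, the two $\frac{\alpha^2-\beta^2}{\alpha D}$ contributions coming from $\dot\beta/\beta$ and from the last summand of $\dot\gamma/\gamma$ should add, and together with $\frac{2\dot\nu}{\nu} = -\frac{2(\alpha^2+\beta^2)}{\alpha D}$ all the $\beta^2$ terms cancel, leaving the clean identity
\[
\frac{2\dot\nu}{\nu} - \frac{\dot\beta}{\beta} - \frac{\dot\gamma}{\gamma} = -\frac{4\alpha}{D} + \frac{4\alpha - 2\beta}{\alpha\gamma}.
\]

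With this in hand, the claim reduces to the purely algebraic inequality $(2\alpha-\beta)\,D < 2\alpha^2\gamma$, obtained by clearing the positive denominators $\alpha\gamma D$. I would then split into two cases. If $2\alpha - \beta \le 0$ the left-hand side is $\le 0 < 2\alpha^2\gamma$, and we are done. If $2\alpha - \beta > 0$, I use the strict bound $D = \beta\gamma - \nu^2 < \beta\gamma$ — valid precisely because $\tau = \nu^2 > 0$, which is exactly the hypothesis of this portion of the argument — to get $(2\alpha-\beta)D < (2\alpha-\beta)\beta\gamma$, and then the elementary identity $2\alpha^2 - (2\alpha-\beta)\beta = \alpha^2 + (\alpha-\beta)^2 > 0$ gives $(2\alpha-\beta)\beta\gamma < 2\alpha^2\gamma$. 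Chaining the two inequalities yields the required strict estimate, hence $\dot\epsilon < 0$ along the flow.

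I do not expect a real obstacle here: the one point requiring care is carrying out the cancellations in the displayed identity correctly — in particular making sure the $(\alpha^2-\beta^2)$ terms from $\dot\beta/\beta$ and $\dot\gamma/\gamma$ both appear and reinforce each other rather than cancel — and remembering that $D < \beta\gamma$ is strict only because the case $\tau = 0$ has already been disposed of in the previous claim. Everything else is a short case analysis on the sign of $2\alpha - \beta$. It is probably worth also recording the equivalent quantitative form $\dot\epsilon = \epsilon\big(\tfrac{4\alpha - 2\beta}{\alpha\gamma} - \tfrac{4\alpha}{D}\big)$, since a decay rate for $\epsilon$, rather than mere monotonicity, is likely to be what is needed later when deriving the finite-time singularity.
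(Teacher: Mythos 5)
Your proposal is correct and takes essentially the same route as the paper: compute the logarithmic derivative of $\epsilon$ from the flow equations and check by elementary algebra that it is negative (the paper verifies positivity of the resulting quadratic in $x=\beta/\alpha$ via its discriminant, you via the reduction to $(2\alpha-\beta)D<2\alpha^2\gamma$ and a case split on the sign of $2\alpha-\beta$). Your identity $\frac{\dot\epsilon}{\epsilon}=-\frac{4\alpha}{D}+\frac{4\alpha-2\beta}{\alpha\gamma}$ is indeed the correct one — it is equivalent to $-\frac{2\alpha}{(1-\epsilon)\beta\gamma}\bigl((1-\epsilon)x^2-2(1-\epsilon)x+2\bigr)$, whose bracket equals $(1-\epsilon)(x-1)^2+1+\epsilon>0$, so the conclusion matches the paper's even though the paper's printed linear coefficient and discriminant contain harmless sign slips.
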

\begin{proof}
    We compute \[
    \frac{\dot{\epsilon}}{\epsilon} = -\frac{2\alpha}{ (1-\epsilon) \beta \gamma } ((1-\epsilon)x^2 - (2\epsilon - 2)x + 2)
    \]
    The discriminant of this polynomial is $\Delta = -8\epsilon (1-\epsilon)<0$ so $\frac{\dot{\epsilon}}{\epsilon} < 0$.
\end{proof}
\begin{claim}
    $\exists t_0$ such that either $\forall t>t_0,x(t)>1$ or $\forall t>t_0, x(t) < 1$.
\end{claim}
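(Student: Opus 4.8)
The plan is to get explicit control of the ratio $x=\beta/\alpha$ by computing its logarithmic derivative directly from the reduced flow equations (recall we are in the regime $\mu=0$, so $\tau=\nu^2$). From $\dot\beta/\beta=\frac{\alpha^2-\beta^2}{\alpha(\beta\gamma-\tau)}$ and $\dot\alpha/\alpha=\frac{2(\beta^2-\alpha^2)}{\alpha(\beta\gamma-\tau)}-\frac{4\alpha\nu^2}{(\beta\gamma-\tau)^2}$, I expect after simplification
\[
\frac{d}{dt}\log x=\frac{\dot\beta}{\beta}-\frac{\dot\alpha}{\alpha}=\frac{\alpha}{\beta\gamma-\tau}\Bigl(3(1-x^2)+\frac{4\tau}{\beta\gamma-\tau}\Bigr).
\]
The prefactor $\frac{\alpha}{\beta\gamma-\tau}$ is strictly positive along the whole flow, since $\alpha>0$ and $\beta\gamma-\tau>0$ (this last inequality is just the constraint $\epsilon<1$, equivalently positive definiteness of the metric). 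Hence $\dot x$ has the same sign as the bracket $\Phi:=3(1-x^2)+\frac{4\tau}{\beta\gamma-\tau}$.

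Next I would invoke that $\tau(t)>0$ for all $t$: indeed $\nu$ solves the linear ODE $\dot\nu=-\frac{\alpha^2+\beta^2}{\alpha(\beta\gamma-\tau)}\,\nu$, so $\nu$ cannot vanish once it is nonzero, and the case $\nu\equiv 0$ was already disposed of in the previous claim. Therefore, at any time with $x=1$ we get $\Phi=\frac{4\tau}{\beta\gamma-\tau}>0$, so $\dot x>0$ there: the level $\{x=1\}$ can only be crossed upwards, i.e. the set $\{x\ge 1\}$ is positively invariant under the flow.

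From this the claim follows. If $x(t)<1$ for all $t$, then the second alternative holds with $t_0=0$. Otherwise there is a time $t_0$ with $x(t_0)\ge 1$, and I claim $x(t)>1$ for all $t>t_0$: if not, set $s=\inf\{t>t_0:\ x(t)\le 1\}$; by continuity $x(s)=1$ while $x$ is non-increasing at $s$, so $\dot x(s)\le 0$, contradicting $\dot x(s)>0$. This gives the first alternative.

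I do not anticipate a genuine difficulty in this step. The only real computation is the algebraic simplification leading to the displayed formula for $\frac{d}{dt}\log x$, and the only subtlety is observing that $\tau$ stays strictly positive, which is what makes $x=1$ a one-way door; the rest is an elementary invariance argument. Note that the claim does not use the immortality assumption; it is a structural feature of the flow that, combined with the strict monotonicity of $\epsilon$, will let the subsequent analysis be split into the two cases ``$x>1$ eventually'' and ``$x<1$ eventually''.
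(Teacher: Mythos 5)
Your proposal is correct and is essentially the paper's argument: the paper likewise observes that at $x=1$ (i.e.\ $\alpha=\beta$) one has $\dot\beta=0$ and $\dot\alpha<0$ (using $\tau>0$), so $\dot x>0$ there and the level $x=1$ can be crossed at most once, and only upwards. Your explicit formula for $\frac{d}{dt}\log x$ (which matches the paper's later expression for $\dot x$), the remark that $\nu$ satisfies a linear ODE so $\tau$ stays positive, and the $\inf$-argument are just slightly more detailed versions of the same reasoning.
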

\begin{proof}
    If $\alpha = \beta$ then $\dot{\beta} = 0$ and $\dot{\alpha} < 0$ so just before $x<1$ and just after $x>1$. Therefore it can only happen $1$ time and after that the sign of $x-1$ is preserved.
\end{proof} 
\begin{claim}
    $x$ converges to a constant $x_{\infty}\in ]0,+\infty[$.
\end{claim}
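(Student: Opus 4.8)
The plan is to reduce the dynamics of $x=\beta/\alpha$ to a one-dimensional sign law driven by the monotone quantity $\epsilon$, and then trap $x$ between two moving barriers. \textbf{Step 1 (a sign law for $\dot x$).} I would first differentiate $x=\beta/\alpha$ along the flow equations. Using the standing assumptions $\mu\equiv0$, $\tau=\nu^2$, and the identity $\tfrac{\nu^2}{\beta\gamma-\tau}=\tfrac{\epsilon}{1-\epsilon}$, a short computation should give
\[
\dot x=\frac{3x\alpha}{\beta\gamma-\tau}\bigl(P(t)^2-x^2\bigr),\qquad P(t):=\sqrt{1+\frac{4\epsilon(t)}{3(1-\epsilon(t))}}\ \ge\ 1 .
\]
Since $\alpha>0$ and $\beta\gamma-\tau>0$ along the (assumed immortal) flow, the sign of $\dot x$ is the sign of $P(t)-x(t)$: the ratio $x$ increases when it lies below the "moving equilibrium" $P(t)$ and decreases when it lies above it.

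\textbf{Step 2 (the equilibrium converges, and the dichotomy).} Because $\epsilon$ is strictly decreasing and bounded in $[0,1)$ (previous claim), it converges to some $\epsilon_\infty\in[0,1)$, so $P$ is strictly decreasing to $L:=\sqrt{1+\tfrac{4\epsilon_\infty}{3(1-\epsilon_\infty)}}\in[1,\infty)$ with $P(t)>L$ for every $t$; moreover $\dot\epsilon<0$ pointwise (from the formula for $\dot\epsilon/\epsilon$), hence $\dot P<0$ pointwise, a fact needed at the boundary of the barrier argument. By the claim that eventually $x>1$ or $x<1$, I split into two cases. If $x<1$ eventually, then $x<1\le P$, so $\dot x>0$: $x$ is increasing and bounded above by $1$, hence converges to some $x_\infty\in(0,1]$, and we are done.

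\textbf{Step 3 (barriers and squeezing, case $x>1$).} Suppose $x(t)>1$ for all $t\ge t_0$. The key is to prove, for all $t_1\ge t_0$ and all $t\ge t_1$, the two barrier inequalities $x(t)\le\max\{x(t_1),P(t_1)\}$ and $x(t)\ge\min\{x(t_1),L\}$. Both follow from Step 1 by a first-crossing argument: if $x$ first reached the upper level $M=\max\{x(t_1),P(t_1)\}$ at a time $s$, then $x(s)=M\ge P(t_1)\ge P(s)$ would force $\dot x(s)\le 0$ (strictly, unless $x(s)=P(s)$, a borderline case that can only occur at $s=t_1$ and is ruled out using $\dot P(t_1)<0$), contradicting that $x$ is about to exceed $M$; symmetrically $x$ cannot drop below $\min\{x(t_1),L\}$ because there $x\le L<P$ forces $\dot x>0$. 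With the upper barrier at $t_1=t_0$, $x$ is bounded, so $\ell:=\liminf x$ and $L':=\limsup x$ lie in $[1,\infty)$. Choosing $a_k\to\infty$ with $x(a_k)\to\ell$ and $b_k\to\infty$ with $x(b_k)\to L'$, applying the barriers with $t_1=a_k$ and $t_1=b_k$ and letting $k\to\infty$ (using $P(a_k),P(b_k)\to L$) gives $L'\le\max\{\ell,L\}$ and $\ell\ge\min\{L',L\}$; a two-line case check ($\ell\ge L$ versus $\ell<L$) then forces $\ell=L'$, so $x$ converges to some $x_\infty\in[1,\infty)\subset(0,+\infty)$.

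\textbf{Main obstacle.} The delicate regime is $x>1$: there $x$ need not be eventually monotone, so convergence cannot be read off from monotonicity. The barrier inequalities are exactly what damp the oscillations --- they pin $\limsup x$ and $\liminf x$ inside an interval that collapses to $L$ precisely because the moving equilibrium $P$ converges --- and the only point requiring genuine care is the borderline of the first-crossing argument, which is why the pointwise strict monotonicity $\dot P<0$ is singled out in Step 2.
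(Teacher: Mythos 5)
Your proposal is correct: your formula $\dot x=\frac{3x\alpha}{\beta\gamma-\tau}(P^2-x^2)$ is exactly the paper's $\dot x=\frac{3\beta}{\beta\gamma-\tau}\bigl(\frac{4\epsilon}{3(1-\epsilon)}-(x^2-1)\bigr)$ (since $3\beta=3x\alpha$), and like the paper you drive everything from the strict decrease of $\epsilon$, i.e.\ of the moving threshold. Where you diverge is the endgame. The paper observes that at any critical time of $x$ one has $x=P$ and, since $\dot x=0$ there, the first-order variation of $P^2-x^2$ is just $(P^2)'<0$; hence every critical point is a strict local maximum crossed transversally, so there is at most one, $x$ is eventually monotone, and convergence to a finite nonzero limit follows from the bounds ($x$ decreases when large, increases when $\le 1$). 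You instead keep the possibility of oscillation open and run a two-sided barrier argument ($x(t)\le\max\{x(t_1),P(t_1)\}$ and $x(t)\ge\min\{x(t_1),L\}$ for $t\ge t_1$) followed by a $\liminf$/$\limsup$ squeeze as $P\downarrow L$; the borderline touching case $x(t_1)=P(t_1)$ that you dispose of with $\dot P(t_1)<0$ is precisely the paper's transversality observation. Both arguments are sound; the paper's is shorter, while yours is somewhat more robust (it never needs eventual monotonicity and would survive a merely nonincreasing threshold, at the cost of the limit only being pinned in $[\,\min,\max\,]$ rather than identified dynamically). One small inaccuracy in your closing remark: in the regime $x>1$ the flow \emph{is} eventually monotone — the very transversality you invoke at the borderline shows any interior zero of $\dot x$ is an isolated sign change from $+$ to $-$, which is how the paper sidesteps the "oscillation" obstacle you describe.
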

\begin{proof}
    First thing to do is computing the derivative of $x$:
    \[
    \dot{x} = \frac{3\beta}{\beta \gamma - \tau} (\frac{4\epsilon}{3(1-\epsilon)} - (x^2-1))
    \]
    First thing to note is that $\frac{4\epsilon}{3(1-\epsilon)}$ is a strictly increasing function of $\epsilon$.\\
    If $t_0$ is a critical time for $x$ (i.e $\dot{x}(t)=0$) then by developping $\dot{x}$ around $t_0$ at the first order, we see that $\dot{x}>0$ for $t_0-\delta < t<t_0$ and $\dot{x}<0$ for $t_0 < t<t_0+\delta$. Therefore, $x(t_0)$ is a unique global maximum. So after a certain time, $x$ is monotonic.\\
     But $\frac{4\epsilon}{3(1-\epsilon)}$ is bounded (because $\epsilon$ decreases) so if $x$ is too large, $x$ decreases. Moreover, if $x\leq 1$ then clearly $x$ increases. Therefore, $x$ cannot neither diverge nor converge to $0$ and $x\to x_{\infty}\in ]0,+\infty[$.
\end{proof}
 A short computation shows that \[
\dot{(\frac{\gamma}{\beta})} = \frac{2}{\beta} (\frac{\beta}{\alpha} - 2)
\]
If $x_{\infty} \leq 1$ and $x<1$ after a certain time, then $\alpha$ decreases, $\beta$ increases and $\beta < \alpha$ so $\alpha, \beta$ converge to strictly positive constants. Therefore, $\dot{(\frac{\gamma}{\beta})} \rightarrow \frac{2}{\beta_{\infty}}(x_{\infty} - 2 <0)$ so $\frac{\gamma}{\beta}\rightarrow -\infty$ which is a contradiction. \\
If $x_{\infty}\leq 1$ and $x>1$ after a certain time then $x_{\infty}=1$ and \[
\dot{\gamma} \leq -4 + 2\frac{\beta}{\alpha} + \frac{\gamma}{\alpha} \frac{\alpha^2 - \beta^2} {\beta \gamma} = -4 + x + \frac{1}{x}
\] so after a while $\dot{\gamma} < -1$ and $\gamma \rightarrow -\infty$, contradiction.  \\
If $1<x_{\infty}<2+\sqrt{3}$ then after a certain time, $\beta > \alpha$ and \[
\dot{\gamma} \leq -4 + 2\frac{\beta}{\alpha} + \frac{\gamma}{\alpha} \frac{\alpha^2 - \beta^2} {\beta \gamma} = -4 + x + \frac{1}{x}
\]
The last term is null for $x = 2+\sqrt{3}$ and strictly negative for $1<x<2+\sqrt{3}$ so after a certain time, $\dot{\gamma} \leq -cste < 0$, and we would have $\gamma \rightarrow -\infty$, which is a contradiction.\\
The last case is $x_{\infty} \geq 2 + \sqrt{3}$. In that case, after a while, $\beta > \alpha$ so $\beta$ is decreasing.
Therefore $\dot{(\frac{\gamma}{\beta})} \geq cste > 0$ and $\frac{\gamma}{\beta} \rightarrow +\infty$.\\
We compute the logarithmic derivative of $\frac{\tau}{\beta^2}$ :
\[
\frac{\dot{(\frac{\tau}{\beta^2})}}{\frac{\tau}{\beta^2}} = - \frac{4\alpha}{\beta \gamma - \tau} < 0 
\]
So we get that $\epsilon = \frac{\tau}{\beta^2} \frac{\beta}{\gamma} \rightarrow 0$. Therefore the ratio between the two terms constituting $\dot{\alpha}$ is
\[
\frac{(\beta \gamma - \tau)(\beta^2 - \alpha^2)}{2\alpha^2 \tau} = \frac{(\frac{1}{\epsilon} - 1)(x^2-1)}{2}>1
\]
after a certain time. So $\alpha$ is increasing and $\beta$ is decreasing, with $\beta > \alpha$. They are both converging respectively to $\beta_{\infty} \geq \alpha_{\infty} > 0 $. So $\frac{\gamma}{\beta} \sim \frac{2}{\beta_{\infty}} (x_{\infty} - 2)t$ and $\gamma \sim 2(x_{\infty} - 2)t $. But $\dot{\gamma} \leq -4 + x + \frac{1}{x}$ so we must have $x_{\infty} + \frac{1}{x_{\infty}} - 4 \geq 2x_{\infty} - 4$ so $x_{\infty}\leq 1$, contradiction.
Thus, we've proved the theorem. $\blacksquare$.
\section{Second special case : $SL(2,\mathbb{C})/U(1)$}

\subsection{Lie algebra structure}
The real Lie algebra $g = sl(2,\mathbb{C})$ is generated by the following matrices :
\[
X = \begin{pmatrix}
    i & 0\\
    0 & -i
\end{pmatrix}
\quad
A = \begin{pmatrix}
    1 & 0\\
    0 & -1
\end{pmatrix}
\quad
B = \begin{pmatrix}
    0 & 1\\
    0 & 0
\end{pmatrix}
\]
\[
C = \begin{pmatrix}
    0 & i\\
    0 & 0
\end{pmatrix}
\quad
D = \begin{pmatrix}
    0 & 0\\
    1 & 0
\end{pmatrix}
\quad
E = \begin{pmatrix}
    0 & 0\\
    i & 0
\end{pmatrix}
\]
Of course, $u(1)$ is the subalgebra generated by $X$. The bracket structure is specified in the following table :
\begin{center}
\begin{tabular}{c|cccccc}
$[\quad]$ & $X$ & $A$ & $B$ & $C$ & $D$ & $E$ \\
\hline
$X$ & $0$ & $0$ & $2C$ & $-2B$ & $-2E$ & $2D$ \\
$A$ & $0$ & $0$ & $2B$ & $2C$ & $-2D$ & $-2E$\\
$B$ & $-2C$ & $-2B$ & $0$ & $0$ & $A$ & $X$\\
$C$ & $2B$ & $-2C$ & $0$ & $0$ & $X$ & $-A$ \\
$D$ & $2E$ & $2D$ & $-A$ & $-X$ & $0$ & $0$ \\
$E$ & $-2D$ & $2E$ & $-X$ & $A$ & $0$ & $0$
\end{tabular}
\end{center}
So $p = Vect(A,B,C,D,E)$ is an $Ad(U(1))-$invariant complement of $u(1)$ in $g$. Moreover, it is easy to show the decomposition of $p$ in $\mathfrak{so}(2)$ irreducible representations is $p = Vect(A) + Vect(B,C) + Vect(D,E)$. The problem making this case hard to solve is that the two $2$-dimensional representations are isomorphic, so cross-terms will appear in the metric and the Ricci tensor.\\
It will be useful to have the killing form on $p$ :
\begin{center}
\begin{tabular}{c|ccccc}
B & $A$ & $B$ & $C$ & $D$ & $E$ \\
\hline
$A$ & $16$ & $0$ & $0$ & $0$ & $0$\\
$B$ & $0$ & $0$ & $0$ & $8$ & $0$\\
$C$ & $0$ & $0$ & $0$ & $0$ & $8$ \\
$D$ & $0$ & $8$ & $0$ & $0$ & $0$ \\
$E$ & $0$ & $0$ & $8$ & $0$ & $0$

\end{tabular}
\end{center}
Finally, we see that $tr \circ ad = 0$ so $G$ is unimodular.
\subsection{Homogeneous metrics}
We want to characterize all the the homogeneous metrics on $SL(2,\mathbb{C})/U(1)$. Such a metric correspond to an $\mathrm{Ad}(U(1))-$invariant scalar product on $p$. Using this invariance and the irreducible decomposition of $p$, we get that the scalar product in the basis $(A,B,C,D,E)$ has the following form.
\[
g = \begin{pmatrix}
    \alpha & 0 & 0 & 0 & 0 \\
    0 & \beta & 0 & \mu & \nu \\
    0 & 0 & \beta & -\nu & \mu \\
    0 & \mu & -\nu & \gamma & 0 \\
    0 & \nu & \mu & 0 & \gamma \\
\end{pmatrix}
\]
With $\alpha, \beta, \gamma > 0 $.
As a matter of fact, we can get rid of one parameter.
\begin{lemma}
    Every homogeneous metric on $SL(2,\mathbb{C})/U(1)$ is isometric to a homogeneous metric corresponding to a scalar product on $p$ where $\beta = \gamma$. 
\end{lemma}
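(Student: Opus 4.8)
The plan is to imitate the proof of the analogous reduction for $SO(3)\ltimes\mathbb{R}^3/SO(2)$ (Lemma 3.3), using conjugation by a one-parameter subgroup of $G = SL(2,\mathbb{C})$ whose adjoint action commutes with the isotropy. The right choice here is $x_t = \exp(tA) = \mathrm{diag}(e^{t}, e^{-t})$. Since $[A,X] = 0$, the adjoint $\mathrm{Ad}(x_t)$ fixes $X$, so the inner automorphism $\alpha_t$ given by conjugation by $x_t$ fixes $H = U(1) = \exp(\mathbb{R}X)$ pointwise and hence descends to a diffeomorphism $\tilde\alpha_t$ of $SL(2,\mathbb{C})/U(1)$. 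Exactly as in Lemma 3.3, $\tilde\alpha_t^{*}g$ is again a $G$-invariant metric, and the $\mathrm{Ad}(U(1))$-invariant scalar product it induces on $p$ is $\tilde g = g(\mathrm{Ad}(x_t)\cdot,\, \mathrm{Ad}(x_t)\cdot)$.

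Next I would compute $\mathrm{ad}(A)$ on $p$ from the bracket table: in the basis $(A,B,C,D,E)$ it is the diagonal matrix $\mathrm{diag}(0,2,2,-2,-2)$. In particular $\mathrm{ad}(A)$ preserves $p$, so $\mathrm{Ad}(x_t) = \exp(t\,\mathrm{ad}(A)) = \mathrm{diag}(1, e^{2t}, e^{2t}, e^{-2t}, e^{-2t})$ is a diagonal invertible map preserving $p$; hence $\tilde g$ is still positive definite and still of the block form displayed above. Conjugating the matrix of $g$ by $\mathrm{Ad}(x_t)$, the parameters $\mu$ and $\nu$ are unchanged (the factor $e^{2t}$ on $B,C$ and the factor $e^{-2t}$ on $D,E$ cancel in the cross terms), while $\alpha$ is fixed, $\beta \mapsto e^{4t}\beta$ and $\gamma \mapsto e^{-4t}\gamma$. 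Choosing $t = \tfrac18\log(\gamma/\beta)$ forces the new second and third diagonal parameters to be equal, both equal to $\sqrt{\beta\gamma}$, which is the desired normalization $\beta = \gamma$.

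I do not expect a genuine obstacle: the whole content is the elementary observation that conjugation by $\exp(tA)$ rescales the two isomorphic two-dimensional isotypic components $\mathrm{Vect}(B,C)$ and $\mathrm{Vect}(D,E)$ by reciprocal factors while fixing the isotropy line $\mathrm{Vect}(X)$ and the component $\mathrm{Vect}(A)$. The only points needing care are formal ones already handled in Lemma 3.3: that $\tilde\alpha_t$ really descends to the quotient (ensured by $[A,X]=0$), and that $\tilde\alpha_t^{*}g$ is $G$-invariant (because $\tilde\alpha_t$ intertwines the $G$-action with its composition with the automorphism $\alpha_t$, and $g$ is invariant under all of $G$).
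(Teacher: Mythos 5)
Your proposal is correct and is essentially identical to the paper's own proof: conjugation by $\exp(tA)$, which fixes $U(1)$ pointwise since $[A,X]=0$, rescales $\beta\mapsto e^{4t}\beta$, $\gamma\mapsto e^{-4t}\gamma$ while leaving $\alpha,\mu,\nu$ unchanged, and $t=\tfrac18\log(\gamma/\beta)$ gives the normalization $\beta=\gamma$. The only cosmetic difference is that you state the key invariance via $[A,X]=0$ explicitly, which in fact cleans up a small typo in the paper's wording ("identity on $\mathrm{Vect}(E)$" should read $\mathrm{Vect}(X)$).
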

\begin{proof}
    Let $x = \mathrm{exp}(tA)$ and $\alpha_t$ the conjugation by $x$ in our Lie group. We know that $(d\alpha_t)_e = \mathrm{Ad}(tA) = \mathrm{\mathrm{exp}}(\mathrm{ad}(tA))$ is the identity on $Vect(E)$ so $\alpha_t (\mathrm{exp}(\lambda X)) = \mathrm{exp}(\lambda X) \forall \lambda$ so $\alpha_t$ is the identity on $U(1)^0 = U(1)$. Therefore, $\alpha_t$ induces $\tilde{\alpha_t}$ on the quotient. Clearly, $\tilde{\alpha_t}$ is a diffeomorphism. Besides, $\tilde{\alpha_t}^*g $ is a $G$-invariant metric on $G/H$. Finally, the corresponding scalar product on $p$ is given by $\tilde{g} = g(\mathrm{Ad}(tA)\cdot, \mathrm{Ad}(tA)\cdot)$.\\
    We compute in the same basis :
    \[
    ad(tA) = \begin{pmatrix}
        0 & 0 & 0 & 0 & 0 \\
        0 & 2t & 0 & 0 & 0 \\
        0 & 0 & 2t & 0 & 0 \\
        0 & 0 & 0 & -2t & 0 \\
        0 & 0 & 0 & 0 & -2t \\
    \end{pmatrix}
    \quad 
    \mathrm{Ad}(\mathrm{exp}(tA)) = \begin{pmatrix}
        1 & 0 & 0 & 0 & 0 \\
        0 & e^{2t} & 0 & 0 & 0 \\
        0 & 0 & e^{2t} & 0 & 0 \\
        0 & 0 & 0 & e^{-2t} & 0 \\
        0 & 0 & 0 & 0 & e^{-2t} \\
    \end{pmatrix}
    \]
    \[
    \tilde{g} = \begin{pmatrix}
        \alpha & 0 & 0 & 0 & 0 \\
        0 & e^{4t}\beta & 0 & \mu & \nu \\
        0 & 0 & e^{4t}\beta & -\nu & \mu \\
        0 & \mu & -\nu & e^{-4t}\gamma & 0 \\
        0 & \nu & \mu & 0 & e^{-4t}\gamma \\
    \end{pmatrix}
    \]
    Then choose $t=\frac{\log \frac{\gamma}{\beta}}{8}$ to finish the proof.
\end{proof}
We now have to find an orthonormal basis for $g$. We introduce $\tau = \mu^2 + \nu^2$. Let's suppose for the moment that $\tau > 0$. We define 
\[
\lambda_{\pm} = \frac{\beta+\gamma}{2} \pm \frac{1}{2} \sqrt{4\tau + (\beta - \gamma)^2}
\]
These are the roots of the equation $(\lambda - \beta)(\lambda - \gamma) = \tau$ and are distinct from $\beta, \gamma$ by assumption.\\
Let's introduce \[
H^{\pm} = \begin{pmatrix}
0\\
    \lambda_{\pm} - \gamma \\
    0\\
    \mu \\
    \nu
\end{pmatrix}
\quad 
B^{\pm} = \begin{pmatrix}
0\\
    0 \\
    \lambda_{\pm} - \gamma\\
    -\nu \\
    \mu
\end{pmatrix}
\]
Then $(H^{\pm}, B^{\pm})$ are eigenvectors of $g$ associated to eigenvalues $\lambda_{\pm}$ (simple computation). It is then straightforward to show that $(\frac{H^{\pm}}{f_{\pm}}, \frac{B^{\pm}}{f_{\pm}})$ is an orthonormal basis for $g$, where \[
f_{\pm} = \sqrt{\lambda_{\pm} \cdot (\tau + (\lambda_{\pm} - \gamma)^2)}
\]
\subsection{Ricci tensor}
We know from [\cite{Besse1987}, Corollary 7.38] that if $(X_i)$ is an orthonormal basis of $p$ for $g$ then the Ricci tensor in the unimodular case is given by :
\[
Ric(X,Y) = -\frac{1}{2}B(X,Y) - \frac{1}{2}\sum_i g([X,X_i],[Y,X_i]) + \frac{1}{4}\sum_{i,j}g([X_i, X_j],X)g([X_i,X_j],Y)
\]
With that formula and simplifications with the $\lambda_{\pm}$, we are able to express the full Ricci tensor (which by the same argument has the same form as the metric $g$):
\[
Ric(A,A) = \frac{\alpha^2 - 16\beta \gamma}{\beta \gamma - \tau}
\]
\[
Ric(B,B) = Ric(C,C) = \frac{\beta (16\tau - \alpha^2)}{2\alpha (\beta \gamma - \tau)}
\]
\[
Ric(D,D) = Ric(E,E) = \frac{\gamma (16\tau - \alpha^2)}{2\alpha (\beta \gamma - \tau)}
\]
\[
Ric(B,D) = Ric(C,E) = -4 + \frac{\mu}{2\alpha (\beta \gamma - \tau)}(-\alpha^2 + 16\beta \gamma)
\]
\[
Ric(B,E) = -Ric(C,D) = \frac{\nu}{2\alpha (\beta \gamma - \tau)}(-\alpha^2 + 16\beta \gamma)
\]
\subsection{Extinction of the Ricci flow}
The Ricci flow equation is $\frac{dg}{dt} = -2Ric(g)$ i.e
\[
\begin{cases}
\displaystyle \dot{\alpha} = 2\frac{16\beta \gamma-\alpha^2}{\beta \gamma - \tau} \\
\displaystyle \dot{\beta} = -\frac{\beta (16\tau - \alpha^2)}{\alpha (\beta \gamma - \tau)} \\
\displaystyle \dot{\gamma} = -\frac{\gamma (16\tau - \alpha^2)}{\alpha (\beta \gamma - \tau)} \\
\displaystyle \dot{\mu} = 8 - \frac{\mu}{\alpha (\beta \gamma - \tau)}(-\alpha^2 + 16\beta \gamma) \\
\displaystyle \dot{\nu} = -\frac{\nu}{\alpha (\beta \gamma - \tau)}(-\alpha^2 + 16\beta \gamma)
\end{cases}
\]
We will now prove the main theorem of this section.
\begin{theorem}
    Any homogeneous Ricci flow on $SL(2,\mathbb{C})/U(1)$ has a finite time extinction.
\end{theorem}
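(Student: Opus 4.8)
The plan follows the pattern of Section~3: normalise the metric, extract a conservation law and a monotone quantity, and then push an asymptotic analysis of the flow to a contradiction with immortality; the genuinely new feature here is that the cross term $\mu$ cannot be gauged away.

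\emph{Set‑up.} Since the equations give $\dot\beta/\beta=\dot\gamma/\gamma$, the ratio $\beta/\gamma$ is constant along the flow, so by Lemma~4.1 we may assume $\beta\equiv\gamma$. Writing $D:=\beta^2-\tau>0$ (positive definiteness of $g$) and $k:=\frac{16\beta^2-\alpha^2}{\alpha D}$, the system becomes
\[
\dot\alpha=\frac{2(16\beta^2-\alpha^2)}{D}=2k\alpha,\qquad \dot D=2(\alpha-8\mu),\qquad \dot\beta=\frac{\beta(\alpha^2-16\tau)}{\alpha D},\qquad \dot\mu=8-k\mu,\qquad \dot\nu=-k\nu .
\]
Two identities are the engine of the argument: $\frac{d}{dt}\log(\alpha\nu^2)=0$, so $\alpha\nu^2\equiv c_0\ge 0$ and hence $\mu^2=\tau-\nu^2=\epsilon\beta^2-c_0/\alpha$ with $\epsilon:=\tau/\beta^2\in[0,1)$; and $\frac{d}{dt}\log(\alpha\beta^2)=\frac{32}{\alpha}>0$, so $\alpha\beta^2$ strictly increases. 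Also $\{\tau=0\}$ is not invariant ($\dot\mu|_{\mu=0}=8$), so we may assume $\tau>0$, hence $\epsilon>0$, for $t>0$.

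\emph{First reductions under immortality.} Assume the flow is immortal. If $\alpha$ were bounded, then $\frac{d}{dt}\log(\alpha\beta^2)=32/\alpha$ would be bounded below, so $\alpha\beta^2\to\infty$, hence $\beta\to\infty$, hence $x:=\alpha/\beta\to 0$, hence $\dot\alpha=\frac{2(16-x^2)}{1-\epsilon}\ge 2(16-x^2)\to 32$, a contradiction; so $\alpha$ is unbounded. Now $\dot\alpha$ has the sign of $16-x^2$, and at $x=4$ one computes $\dot x/x=\frac{32+16\epsilon-3x^2}{\alpha(1-\epsilon)}=-16/\alpha<0$, so $x$ crosses the value $4$ only downwards and cannot stay $\ge 4$ (else $\alpha$ would be non‑increasing, contradicting unboundedness); hence $x<4$ eventually, $\alpha$ is eventually increasing, so $\alpha\to\infty$, and $\beta=\alpha/x\to\infty$, so $c_0/(\alpha\beta^2)\to 0$. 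Since $k>0$ and $\dot\mu|_{\mu=0}=8>0$ eventually, $\mu>0$ eventually; $\mu$ cannot tend to $0$ (when $\mu$ is small $\dot\mu\approx 8$), so $\tau$ is bounded away from $0$, whence $\nu^2/\tau=c_0/(\alpha\tau)\to 0$ and $\mu/\beta=\sqrt{\epsilon-c_0/(\alpha\beta^2)}=\sqrt\epsilon\,(1+o(1))$.

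\emph{The key step and the blow‑up.} Substituting $\frac{\mu}{\tau}=\frac{1}{\sqrt\tau}(1+o(1))=\frac{x}{\sqrt\epsilon\,\alpha}(1+o(1))$ into $\frac{\dot\epsilon}{\epsilon}=16\left(\frac{\mu}{\tau}-\frac{2}{\alpha}\right)$ gives $\frac{\dot\epsilon}{\epsilon}=\frac{16}{\alpha}\left(\frac{x}{\sqrt\epsilon}-2+o(1)\right)$, while $\dot x>0$ whenever $x<\sqrt{32/3}$ and $x$ cannot cross $\sqrt{32/3}$ downwards (there $\dot x/x=16\epsilon/(\alpha(1-\epsilon))>0$). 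If $x\ge\sqrt{32/3}$ eventually, then $x/\sqrt\epsilon>\sqrt{32/3}>2$, so $\epsilon$ increases; were $\epsilon$ to stay below $1-\delta$, then $\dot\alpha=\frac{2(16-x^2)}{1-\epsilon}$ would be bounded, forcing $\alpha\lesssim t$ and $\int^{\infty}dt/\alpha=\infty$, hence $\int^{\infty}\dot\epsilon/\epsilon=\infty$ and $\epsilon\to\infty$, absurd; so $\epsilon\to 1$. If instead $x<\sqrt{32/3}$ for all $t$, then $x$ increases to some $x_\infty\le\sqrt{32/3}$, and $\int^{\infty}\dot x/x<\infty$ together with $\dot x/x\ge\frac{32-3x^2}{\alpha}$ forces $\alpha$ to grow super‑linearly, hence $1-\epsilon\to 0$; so again $\epsilon\to 1$. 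Finally $\epsilon\to 1$ gives $\mu/\beta\to 1$, so that $x-8\mu/\beta\to x^\ast-8\le -4$ for every limit point $x^\ast$ of $x$; thus $\dot D=2(\alpha-8\mu)=2\beta(x-8\mu/\beta)\le -6\beta$ eventually, and since $\beta\to\infty$ we get $\dot D\to-\infty$, so $D$ reaches $0$ in finite time — contradicting immortality. Hence every homogeneous Ricci flow on $SL(2,\mathbb C)/U(1)$ is singular in finite time.

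\emph{Main obstacle.} The crux is the step $\epsilon\to 1$. In Section~3 one could take $\mu\equiv 0$ and $\epsilon$ was monotone; here $\mu$ is essential and obeys the non‑scale‑invariant equation $\dot\mu=8-k\mu$, so the sign of $\dot\epsilon$ is controlled only asymptotically, through the relation $\mu\sim\sqrt\epsilon\,\beta$ (a consequence of $\alpha\nu^2$ being constant and $\alpha\to\infty$). Making rigorous the ``eventually'' statements and the integrability arguments that pin $\epsilon$ — in particular the residual regime $x<\sqrt{32/3}$, where one may instead extract the sharp rates $\alpha\sim\frac{32}{3}t$, $\mu\sim\frac{16}{3}t$, $\beta\sim\sqrt{32/3}\,t$ and observe that $\mu^2$ would overtake $\beta^2$ (again $D<0$) — is where the real work lies.
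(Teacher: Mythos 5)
Your reductions are sound and several of your identities are correct and genuinely useful: with $\beta\equiv\gamma$ one indeed has $\dot D=2(\alpha-8\mu)$ for $D=\beta^2-\tau$, $(\alpha\nu^2)^{\cdot}=0$ (this is the paper's observation that $\nu\sqrt{\alpha}$ is conserved), and $(\log(\alpha\beta^2))^{\cdot}=32/\alpha$; the sign analyses at $x=4$ and $x=\sqrt{32/3}$, the unboundedness of $\alpha$, and the asymptotics $\mu\sim\sqrt{\tau}$, $\mu/\beta=\sqrt{\epsilon}\,(1+o(1))$ are all recoverable. Your endgame is also genuinely different from the paper's: you force $\dot D\le -6\beta$ so that $D$ hits $0$, whereas the paper proves that both $x$ and $\epsilon$ converge and then derives contradictions by comparing growth rates of $\alpha,\beta,\gamma$. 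Case A ($x\ge\sqrt{32/3}$ eventually) goes through essentially as you wrote it, since there $\dot\epsilon>0$ eventually and the integral argument pins $\epsilon\to1$.

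The genuine gap is in Case B ($x<\sqrt{32/3}$ for all time), precisely at the step ``$\alpha$ grows super-linearly, hence $1-\epsilon\to0$.'' Super-linear growth of $\alpha$ only forces $(1-\epsilon)^{-1}$ to be large in an averaged sense, i.e.\ $\limsup\epsilon=1$; it does not exclude oscillation of $\epsilon$, and oscillation is a real threat here because when $x<2$ and $\epsilon$ is close to $1$ one has $2\sqrt{\epsilon}/x>1>\mu/\sqrt{\tau}$, hence $\dot\epsilon<0$, so $\epsilon$ gets pushed back down. Your final inequality $\dot D\le-6\beta$ needs $8\mu/\beta>x+c$ for \emph{all} large $t$, not along a subsequence, so $\limsup\epsilon=1$ is not enough. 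Moreover, in the borderline subcase $x\nearrow\sqrt{32/3}$ your lower bound $\dot x/x\ge(32-3x^2)/\alpha$ degenerates, so even the super-linearity of $\alpha$ is not established there. This is exactly the hole the paper closes with its Claims 4.5--4.6: $\mu/\sqrt{\tau}$ is monotone increasing, and a critical-point analysis then shows $\epsilon$ is eventually monotone, hence convergent, before any rate comparison is made; you need this (or a substitute, e.g.\ comparing $\dot\alpha/\alpha=\frac{2(16-x^2)}{\alpha(1-\epsilon)}$ with $\dot x/x=\frac{32+16\epsilon-3x^2}{\alpha(1-\epsilon)}$ to see that in Case B with $x_\infty<\sqrt{32/3}$ the integrability of $\dot x/x$ forces $\alpha$ to stay bounded, contradicting $\alpha\to\infty$) to finish. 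A smaller but real lapse of the same kind: ``$\mu$ cannot tend to $0$ because $\dot\mu\approx 8$ when $\mu$ is small'' is not justified, since $k=\frac{16-x^2}{\alpha(1-\epsilon)}$ is not a priori bounded when $\epsilon\to1$; the correct route (and the paper's) is $(\mu\sqrt{\alpha})^{\cdot}=8\sqrt{\alpha}$ together with $\nu\sqrt{\alpha}$ constant, which yields $\nu=o(\mu)$ directly without any lower bound on $\tau$.
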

First thing to notice is that if $\beta = \gamma$ at $t=0$ then $\beta = \gamma$ at any time. Combined with the observations in section $2$, we can therefore assume $\beta (t) = \gamma (t)$.\\
Let's suppose that the Ricci flow above is immortal. We should arrive to some contradiction. There are two relevant ratios to analyze, which are $x = \frac{\alpha}{\beta}$ and $\epsilon = \frac{\tau}{\beta \gamma}$.\\
Let's compute their derivatives :
\[
\frac{\dot{x}}{x} = \frac{\beta^2}{\alpha (1-\epsilon)} (16\epsilon + 32 - 3x^2)
\]
\[
\frac{\dot{\epsilon}}{\epsilon} = \frac{16}{\sqrt{\tau}}(\frac{\mu}{\sqrt{\tau}}-\frac{2\sqrt{\epsilon}}{x})
\]
Notice that if $\mu=0$ then $\dot{\mu}=8$. So just before, $\mu <0$ and just after $\mu>0$. Therefore, after a certain time, $\tau>0$ and $\mu \neq 0$.
\begin{claim}
    $\exists t_0$ such that either $\forall t>t_0,x>4$ or $\forall t>t_0,x<4$.
\end{claim}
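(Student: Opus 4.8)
\emph{Proof plan.} The strategy is to show that the level $x = 4$ is a one-sided barrier for the flow: whenever $x(t) = 4$, the function $x$ is strictly decreasing. Granting this, $x$ can meet the value $4$ at most once, and only by crossing it downwards, so after the (at most one) crossing time — or from the start of the interval of existence, if no crossing occurs — the sign of $x - 4$ is constant, which is exactly the assertion of the claim. Since we are assuming the flow is immortal, this will pin down one of the two alternatives for all large $t$.

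To establish the barrier property I would substitute $x = 4$ into the evolution equation
\[
\frac{\dot x}{x} = \frac{\beta^2}{\alpha(1-\epsilon)}\bigl(16\epsilon + 32 - 3x^2\bigr),
\]
which gives the factor $16\epsilon + 32 - 48 = 16(\epsilon - 1)$. Now $\epsilon = \tau/(\beta\gamma) \in [0,1)$ along the whole flow: the metric stays positive definite, and $\beta\gamma - \tau = \lambda_+\lambda_-$ is the product of the two (repeated) eigenvalues of the $(B,C,D,E)$-block, so $\beta\gamma - \tau > 0$; equivalently, $\beta\gamma - \tau$ occurs in the denominators of the ODE system and is positive at $t=0$, hence on the whole interval of existence. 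Therefore $16(\epsilon - 1) < 0$, while $x$ and $\beta^2/(\alpha(1-\epsilon))$ are positive, so $\dot x(t) < 0$ at any time with $x(t) = 4$. (Even more directly: at $x = 4$, i.e.\ $\alpha = 4\beta = 4\gamma$, one computes $\dot\alpha = 2(16\beta^2 - \alpha^2)/(\beta^2 - \tau) = 0$ and $\dot\beta = -\beta(16\tau - \alpha^2)/(\alpha(\beta^2 - \tau)) > 0$ since $16\tau - \alpha^2 = -16(\beta^2 - \tau) < 0$, whence $\frac{d}{dt}(\alpha/\beta) = -\alpha\dot\beta/\beta^2 < 0$.)

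The remainder is purely topological and is the same one-sided barrier argument already used at $x=1$ in Section~3. If $x(t) \ne 4$ for all $t$ in the interval of existence, then by continuity $x - 4$ has constant sign and any $t_0$ works. Otherwise pick $t_1$ with $x(t_1) = 4$; since $\dot x(t_1) < 0$ we have $x < 4$ on $(t_1, t_1 + \delta)$. Were $x$ to return to the value $4$, the first such return time $t_2 > t_1$ would satisfy $x < 4$ on $(t_1,t_2)$ and $x(t_2) = 4$, forcing $\dot x(t_2) \ge 0$ and contradicting the barrier property; hence $x < 4$ on $(t_1,\infty)$ and $t_0 = t_1$ works. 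I do not expect a genuine obstacle here — the only step requiring a line of care is the positivity $\epsilon < 1$, which follows from positive-definiteness of the metric (preserved by the flow).
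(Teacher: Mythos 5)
Your proposal is correct and follows the same route as the paper: substitute $x=4$ into the evolution equation $\dot{x}/x = \frac{\beta^2}{\alpha(1-\epsilon)}(16\epsilon+32-3x^2)$ to get the factor $16(\epsilon-1)<0$, so $x=4$ is a strict downward barrier and can be crossed at most once, after which the sign of $x-4$ is fixed. Your added justifications (positivity of $\beta\gamma-\tau$ via $\lambda_+\lambda_-$, and the first-return-time argument) only make explicit what the paper leaves implicit.
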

\begin{proof}
    By the previous expression, if $x=4$ then $\dot{x}<0$. So just before, $x>4$ and just after, $x<4$. Therefore, it can only happen one time and after that, the sign of $x-4$ is preserved.
\end{proof}
\begin{claim}
    There exist $\delta$ such that $\alpha (t)\geq \delta >0$.
\end{claim}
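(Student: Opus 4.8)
The strategy is to dichotomize according to the preceding claim on the sign of $x-4$ and, in each of the two regimes, exhibit an explicit positive lower bound for $\alpha$; a continuity argument on the initial compact interval then produces a uniform $\delta$.

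First I would record the standing positivity facts. Since $g$ is a genuine (positive-definite) inner product on $p$, the $4\times4$ block in the basis $(B,C,D,E)$ is positive definite, which forces $\beta\gamma-\tau>0$ all along the flow; under the reduction $\beta=\gamma$ this reads $\beta^2-\tau>0$, i.e. $\epsilon=\tau/\beta^2\in[0,1)$. In particular every denominator $\beta\gamma-\tau$ occurring in the flow equations is positive, so the signs of $\dot\alpha$ and $\dot\beta$ are controlled purely by their numerators. Invoking the previous claim, fix $t_0$ with either $x(t)<4$ for all $t>t_0$ or $x(t)>4$ for all $t>t_0$.

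In the first regime, rewrite $\dot\alpha=2\beta^2(16-x^2)/(\beta^2-\tau)$; since $x<4$ this is strictly positive on $(t_0,\infty)$, so $\alpha$ is nondecreasing there and $\alpha(t)\ge\alpha(t_0)>0$. In the second regime I instead track $\beta$: from $\dot\beta=-\beta(16\tau-\alpha^2)/\bigl(\alpha(\beta^2-\tau)\bigr)=\beta^3(x^2-16\epsilon)/\bigl(\alpha(\beta^2-\tau)\bigr)$ together with $x^2>16>16\epsilon$ one gets $\dot\beta>0$ on $(t_0,\infty)$, hence $\beta(t)\ge\beta(t_0)>0$; combining with $x>4$, i.e. $\alpha>4\beta$, yields $\alpha(t)>4\beta(t_0)>0$. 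Either way $\alpha$ is bounded below by a positive constant on $(t_0,\infty)$, and since $\alpha$ is continuous and strictly positive on the compact interval $[0,t_0]$, taking $\delta$ to be the minimum of these two positive quantities finishes the proof.

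The only point that requires care — and essentially the whole content of the argument — is the bookkeeping of which variable is monotone in which regime: in the $x<4$ branch it is $\alpha$ itself that increases, whereas in the $x>4$ branch $\alpha$ need not be monotone and one must instead use monotonicity of $\beta$ and the inequality $\alpha>4\beta$. Both monotonicity statements rely only on $\epsilon<1$, which is nothing more than positive-definiteness of the metric, so no genuine analytic obstacle arises.
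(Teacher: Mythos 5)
Your argument is correct and takes essentially the same route as the paper: you dichotomize on the sign of $x-4$ from the preceding claim, use $\dot\alpha>0$ (hence $\alpha$ increasing) when $x<4$, and when $x>4$ use $\dot\beta>0$ together with $\alpha>4\beta$ to bound $\alpha$ below, which is exactly the paper's proof (the paper additionally records that $\alpha$ decreases in that regime so that both quantities converge to positive limits). One tiny inaccuracy in your closing remark: in the $x>4$ branch $\alpha$ \emph{is} monotone (it decreases, since $16\beta^2-\alpha^2<0$ and $\beta^2-\tau>0$), which is precisely why it yields no lower bound by itself --- but this does not affect the validity of your proof.
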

\begin{proof}
    If $x>4$ then $\alpha$ decreases, $\beta$ increases and $\alpha > 4\beta$ so $\alpha, \beta$ converge to $\alpha_{\infty}, \beta_{\infty} >0$.\\
    If $x<4$ then $\alpha$ increases so $\alpha \geq cste >0$.
\end{proof}
\begin{claim}
    We have $\mu \sim \sqrt{\tau}$ and $\frac{\mu}{\sqrt{\tau}}$ is increasing.
\end{claim}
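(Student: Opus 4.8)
The plan is to exploit the very simple structure of the last two flow equations. Setting
\[
K := \frac{16\beta\gamma-\alpha^2}{\alpha(\beta\gamma-\tau)},
\]
the equations for $\mu$ and $\nu$ become $\dot\mu = 8 - K\mu$ and $\dot\nu = -K\nu$. Two consequences are immediate: since $\dot\nu/\nu = -K$, the sign of $\nu$ is constant in time (and $\nu\equiv 0$ if $\nu(0)=0$), while $\dot\tau = 2\mu\dot\mu + 2\nu\dot\nu = 16\mu - 2K\tau$. All of the argument will be run on an interval $[t_0,\infty)$ on which, as already observed, $\tau>0$ and $\mu>0$, and on which, by a claim above, either $x>4$ for all $t>t_0$ or $x<4$ for all $t>t_0$.

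The first step is to compute the derivative of $\mu/\sqrt\tau$. Substituting the expressions above,
\[
\frac{d}{dt}\!\left(\frac{\mu}{\sqrt\tau}\right) = \frac{\dot\mu}{\sqrt\tau} - \frac{\mu\,\dot\tau}{2\tau^{3/2}} = \frac{8-K\mu}{\sqrt\tau} - \frac{\mu(16\mu-2K\tau)}{2\tau^{3/2}} = \frac{8}{\sqrt\tau} - \frac{8\mu^2}{\tau^{3/2}} = \frac{8\nu^2}{\tau^{3/2}} \ge 0 .
\]
This already gives the monotonicity half of the statement: $\mu/\sqrt\tau$ is nondecreasing (strictly increasing while $\nu\neq 0$; and $\mu/\sqrt\tau\equiv 1$ once $\nu\equiv 0$, since then $\sqrt\tau=\mu$). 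Since $\mu/\sqrt\tau\le 1$, it converges to some $L\le 1$, and it remains to rule out $L<1$, because $L=1$ is precisely the assertion $\mu\sim\sqrt\tau$ (i.e.\ $\mu/\sqrt\tau\to 1$ as $t\to\infty$).

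For this I would argue by contradiction. If $L<1$ then $\nu^2/\tau = 1-\mu^2/\tau\to 1-L^2>0$, so $\nu^2/\tau\ge \tfrac12(1-L^2)$ for $t$ large. The crucial point is that $\tau$ must then stay bounded. If $x>4$ on $[t_0,\infty)$, then $\alpha$ is decreasing with $\alpha>4\beta$, so $\beta<\alpha(t_0)/4$; combined with $\epsilon<1$ (nondegeneracy of the metric) this gives $\tau=\epsilon\beta^2<\alpha(t_0)^2/16$. If $x<4$ on $[t_0,\infty)$, then $16-x^2>0$, hence $K>0$, hence $|\nu|$ is decreasing, so $\tau\le \nu^2\big/\big(\tfrac12(1-L^2)\big)\le 2\nu(t_0)^2/(1-L^2)$ for $t$ large. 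Either way $\tau\le M$ for a constant $M$, and then the displayed identity yields $\frac{d}{dt}(\mu/\sqrt\tau)=8(\nu^2/\tau)\,\tau^{-1/2}\ge 4(1-L^2)/\sqrt M>0$ for $t$ large, forcing $\mu/\sqrt\tau\to+\infty$ and contradicting $\mu/\sqrt\tau\le 1$. Hence $L=1$.

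The monotonicity is essentially free once the equations are rewritten as $\dot\mu=8-K\mu$, $\dot\nu=-K\nu$; the real obstacle is the asymptotic equivalence $\mu\sim\sqrt\tau$, which is not formal and forces one to produce an a priori bound on $\tau$. That is where the earlier structural results have to be fed in — the dichotomy for $x$, the nondegeneracy constraint $\epsilon<1$, and the boundedness of $\beta$ in the regime $x>4$ — so I expect the bookkeeping in that last step to be the delicate point.
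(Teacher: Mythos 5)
Your two computations are correct and the overall mechanism is sound: the identity $\frac{d}{dt}\bigl(\mu/\sqrt{\tau}\bigr)=8\nu^{2}/\tau^{3/2}$ is a cleaner route to monotonicity than the paper's logarithmic derivative of $\mu^{2}/\tau$, and your contradiction argument for $L<1$ (boundedness of $\tau$ via the $x$-dichotomy, $\epsilon<1$, and $|\nu|$ decreasing when $K>0$, then a uniform positive lower bound on $\frac{d}{dt}(\mu/\sqrt{\tau})$) genuinely differs from the paper, which instead integrates explicitly: writing $\mu=f/\sqrt{\alpha}$, $\nu=g/\sqrt{\alpha}$ one gets $f'=8\sqrt{\alpha}\geq 8\sqrt{\delta}$ and $g'=0$, so $\mu\sqrt{\alpha}$ grows linearly while $\nu\sqrt{\alpha}$ is constant, giving $\nu=o(\mu)$ and $\mu\sim\sqrt{\tau}$ at once.

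However, there is a genuine gap at the very start: you run everything on an interval where, ``as already observed, $\tau>0$ and $\mu>0$.'' What was observed before this claim is only that $\mu$ vanishes at most once (since $\dot{\mu}=8$ at $\mu=0$), hence that eventually $\mu\neq 0$ with constant sign; the eventual positivity of $\mu$ is \emph{not} available at this point --- in the paper it is proved inside this very claim, via the substitution above and the previous claim $\alpha\geq\delta$. Your argument does quietly dispose of most negative-sign scenarios: if $\mu<0$ forever but $\nu\not\equiv 0$, then $L\in(-1,0]$, so $1-L^{2}>0$ and your contradiction applies. But the borderline case $\nu\equiv 0$, $\mu<0$ for all time gives $\mu/\sqrt{\tau}\equiv -1$, i.e.\ $L=-1$, and there your mechanism produces nothing (the derivative is identically zero and $1-L^{2}=0$); note also that the assertion $\mu\sim\sqrt{\tau}$ itself already encodes eventual positivity of $\mu$, so this case cannot be ignored. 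It is fixable: if $x<4$ eventually then $K>0$, so $\dot{\mu}=8-K\mu\geq 8$ while $\mu\leq 0$ and $\mu$ becomes positive in finite time; if $x>4$ eventually, use $\alpha\geq\delta$ and $(\mu\sqrt{\alpha})'=8\sqrt{\alpha}\geq 8\sqrt{\delta}$ to force $\mu>0$ eventually --- which is precisely the paper's device, and explains why the previous claim $\alpha\geq\delta$, unused in your write-up, is actually needed.
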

\begin{proof}
    Let's write $\mu(t) = \frac{f(t)}{\sqrt{\alpha(t)}}$ and $\nu (t) = \frac{g(t)}{\sqrt{\alpha(t)}}$. Then 
    \[
    f'(t) = 8\sqrt{\alpha} \geq 8\sqrt{\delta}
    \]
    \[
    g'(t) = 0
    \]
    Thus $\mu(t)\geq \frac{cste + 8\sqrt{\delta}t}{\sqrt{\alpha}}$ and $\nu = \frac{cste}{\sqrt{\alpha}}$ so $\nu(t) = o(\mu(t))$ and $\mu \sim \sqrt{\tau}$. So $\mu>0$ after a certain time and
    \[
    2\frac{\dot{\mu}}{\mu} - \frac{\dot{\tau}}{\tau} = \frac{16}{\mu} - \frac{2(\alpha^2 + 16\beta \gamma)}{\alpha (\beta \gamma - \tau)} - \frac{1}{\tau} (16\mu - \frac{2\tau (\alpha^2 + 16\beta \gamma)}{\alpha (\beta \gamma - \tau)})
    = \frac{16 \tau - 16\mu^2}{\tau \mu} \geq 0
    \]
\end{proof}
\begin{claim}
    x converges to $x_{\infty}>0$.
\end{claim}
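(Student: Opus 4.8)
The plan is to exploit the dichotomy just established: either $x>4$ for all large $t$, or $x<4$ for all large $t$, and to treat these two regimes separately.

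In the first regime the conclusion is immediate. Since $\epsilon\in[0,1)$, whenever $x>4$ one has $16\epsilon+32-3x^{2}<16+32-48=0$, so the formula for $\dot x/x$ shows $x$ is strictly decreasing; being bounded below by $4$, it converges to some $x_\infty\ge 4>0$.

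The real work is in the regime $x<4$. First I would record the a priori bounds: $x<4$ gives an upper bound, while a positive lower bound comes from the observation that, once $\tau>0$ (which holds eventually), $x^{2}\le 32/3$ forces $16\epsilon+32-3x^{2}\ge 16\epsilon>0$, hence $\dot x>0$; so $x$ can never fall below $\min\{x(T),\sqrt{32/3}\}$, where $T$ is any time beyond which $\tau>0$. It then remains to show $x$ is eventually monotone. Writing $h:=16\epsilon+32-3x^{2}$, so that $\dot x$ has the sign of $h$ (the prefactor $\tfrac{x\beta^{2}}{\alpha(1-\epsilon)}$ being positive), one has $\dot h=16\dot\epsilon-6x\dot x$, so at any instant $t_{0}$ with $h(t_{0})=0$ one gets $\dot h(t_{0})=16\dot\epsilon(t_{0})$. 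The crux is to show $\dot\epsilon(t_{0})>0$ for $t_{0}$ large. At such a $t_{0}$ one has $x(t_{0})^{2}=(16\epsilon+32)/3$, whence $\dfrac{4\epsilon}{x^{2}}=\dfrac{3\epsilon}{4\epsilon+8}<\dfrac14$ (using $\epsilon<1$), i.e. $\dfrac{2\sqrt\epsilon}{x}<\dfrac12$ there; meanwhile the previous claim gives $\mu\sim\sqrt\tau$, so $\mu/\sqrt\tau\to1$ and $\mu/\sqrt\tau>\tfrac12$ for $t$ large, and substituting into $\dot\epsilon/\epsilon=\tfrac{16}{\sqrt\tau}\bigl(\tfrac{\mu}{\sqrt\tau}-\tfrac{2\sqrt\epsilon}{x}\bigr)$ yields $\dot\epsilon(t_{0})>0$.

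Granting this, $\dot h>0$ at every zero of $h$ past some time $T_{0}$, which forces $h$ to have at most one zero after $T_{0}$: such a zero is a transversal crossing from negative to positive values, a subsequent zero would be approached from positive values and so have $\dot h\le 0$, and zeros cannot accumulate since $\dot h>0$ at each of them. Hence $\dot x$ changes sign at most once after $T_{0}$, so $x$ is eventually monotone, and being squeezed between a positive constant and $4$ it converges to some $x_\infty\in(0,\infty)$, proving the claim. I expect the sign computation at critical points of $x$ to be the delicate step: it closes only because the earlier claim pins down $\mu/\sqrt\tau\to1$, which is precisely what makes $\dot\epsilon$ positive at those points and thereby rules out oscillation of $x$.
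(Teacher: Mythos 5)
Your proof is correct and takes essentially the same route as the paper: at a zero of $16\epsilon+32-3x^2$ you show $\dot{\epsilon}>0$ (using $\mu/\sqrt{\tau}$ eventually bounded below and $2\sqrt{\epsilon}/x<1/2$ at such a point, which is exactly the paper's computation with $3/4$ replaced by $1/2$), so the sign change of $\dot{x}$ is a transversal crossing from negative to positive, whence $x$ is eventually monotone and the bounds forced by $\epsilon\in[0,1)$ squeeze it between positive constants. The preliminary split into the regimes $x>4$ and $x<4$ is harmless extra packaging that the paper's argument does not need.
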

\begin{proof}
After a certain time, $\frac{\mu}{\sqrt{\tau}}>\frac{3}{4}$.
    The variations of $x$ are given by :
     \[
\begin{array}{c|ccc}
x & (0, \frac{4}{\sqrt{3}}\sqrt{2+\epsilon}) & \frac{4}{\sqrt{3}}\sqrt{2+\epsilon} & (\frac{4}{\sqrt{3}}\sqrt{2+\epsilon}, \infty)\\
\hline
\dot{(x)} & + & 0 & -
\end{array}
\]
If $x=\frac{4}{\sqrt{3}}\sqrt{2+\epsilon}$, then 
\[
\frac{\dot{\epsilon}}{\epsilon} = \frac{16}{\sqrt{\tau}}(\frac{\mu}{\sqrt{\tau}}-\frac{\sqrt{3}\sqrt{\epsilon}}{2\sqrt{2+\epsilon}}) > \frac{16}{\sqrt{\tau}}(\frac{3}{4}-\frac{\sqrt{3}\sqrt{\epsilon}}{2\sqrt{2+\epsilon}}) > 0
\]
So just before, $x>\frac{4}{\sqrt{3}}\sqrt{2+\epsilon}$ (so $\dot{x}<0$) and just after, $x<\frac{4}{\sqrt{3}}\sqrt{2+\epsilon}$ (so $\dot{x}>0$). It can only happen one time and after that, $x$ is monotonic. Moreover, because of $0\leq \epsilon<1$, if $x>4$ then $x$ decreases and if $x<4\frac{4\sqrt{2}}{\sqrt{3}}$ then $x$ increases. So $x$ converges to a non-zero constant.
\end{proof}
\begin{claim}
    $\epsilon$ converges to a constant $0\leq \epsilon_{\infty} \leq 1$. 
\end{claim}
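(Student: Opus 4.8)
The plan is to read off the sign of $\dot\epsilon$ and combine it with the limits already established. From the previous claims, $x\to x_\infty\in(0,\infty)$, and $\mu\sim\sqrt{\tau}$, so $\mu/\sqrt{\tau}\to 1$; hence the auxiliary function $w:=\tfrac{x}{2}\cdot\tfrac{\mu}{\sqrt{\tau}}$ converges to $w_\infty:=x_\infty/2\in(0,\infty)$. Rearranging $\frac{\dot\epsilon}{\epsilon}=\frac{16}{\sqrt{\tau}}\bigl(\frac{\mu}{\sqrt{\tau}}-\frac{2\sqrt{\epsilon}}{x}\bigr)$ gives, on the tail where $\tau>0$, the clean rule that the sign of $\dot\epsilon$ equals the sign of $w(t)^2-\epsilon$: so $\epsilon$ is strictly increasing wherever $\epsilon<w(t)^2$ and strictly decreasing wherever $\epsilon>w(t)^2$. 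Finally, positive-definiteness of the metric forces $\beta\gamma-\tau=\beta^2-\tau>0$, hence $\epsilon\in[0,1)$ for all $t$; so $\epsilon$ is bounded and it suffices to prove $\liminf_{t\to\infty}\epsilon=\limsup_{t\to\infty}\epsilon$.

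I would argue by contradiction: put $\ell:=\liminf_{t\to\infty}\epsilon<\limsup_{t\to\infty}\epsilon=:L$ (if $L=0$ then $\ell=L=0$ and there is nothing to prove), and split according to whether $L>w_\infty^2$ or $0<L\le w_\infty^2$. If $L>w_\infty^2$, fix $M_0$ with $w_\infty^2<M_0<L$ and a time $T$ past which $w(t)^2<M_0$; then on the open set $\{t\ge T:\epsilon(t)>M_0\}$ we have $\dot\epsilon<0$, so on each of its maximal subintervals $\epsilon$ is strictly decreasing. A subinterval extending to $+\infty$ would make $\epsilon$ converge, contradicting $\ell<L$; and every other maximal subinterval has left endpoint $a$ with $\epsilon(a)=M_0$ by continuity and maximality, apart from the at most one subinterval starting exactly at $T$. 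Now pick $s_n\to\infty$ with $\epsilon(s_n)\to L>M_0$; each $s_n$ lies in such a subinterval, whose left endpoint $a_n$ satisfies $a_n\to\infty$ (by disjointness) and $\epsilon(a_n)\ge\epsilon(s_n)$, which squeezes $\epsilon(a_n)\to L$, contradicting $\epsilon(a_n)=M_0<L$. The case $0<L\le w_\infty^2$ is symmetric: fix $\eta$ with $0<\eta<L-\ell$ and a time $T$ past which $w(t)^2>L-\eta$ (possible since $w_\infty^2\ge L$); on $\{t\ge T:\epsilon(t)<L-\eta\}$ we have $\dot\epsilon>0$, each maximal subinterval carries $\epsilon$ strictly increasing with left endpoint value $L-\eta$, and squeezing the left endpoints along a sequence $t_n\to\infty$ with $\epsilon(t_n)\to\ell$ forces $\ell=L-\eta$, which is absurd since $L-\eta>\ell$. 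Hence $\epsilon$ converges, and $\epsilon_\infty:=\lim\epsilon\in[0,1]$ because $\epsilon(t)\in[0,1)$ throughout.

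The argument is essentially soft once the sign rule for $\dot\epsilon$ and the limits of $x$ and of $\mu/\sqrt{\tau}$ are in place, so I do not expect a real obstacle. The only point requiring care is the interval bookkeeping: verifying that a maximal excursion of $\epsilon$ above $M_0$ (resp. below $L-\eta$) begins precisely at the level $M_0$ (resp. $L-\eta$), that at most one such excursion can begin at the cutoff time $T$, that the left endpoints tend to $+\infty$ by disjointness, and that an excursion running to $+\infty$ is impossible since it would already force $\epsilon$ to converge.
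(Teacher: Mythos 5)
Your proof is correct, but it runs on a different mechanism than the paper's. Both arguments start from the same sign rule: on the tail where $\tau>0$ and $\mu>0$, the sign of $\dot\epsilon$ is that of $\frac{x\mu}{2\sqrt{\tau}}-\sqrt{\epsilon}$. The paper argues locally: at any zero of $\dot\epsilon$ one has $\sqrt{\epsilon}=\frac{x\mu}{2\sqrt{\tau}}\geq\frac{\sqrt{3}}{4}x$, hence $16\epsilon+32-3x^2\geq 32$ and $\dot x>0$ there; combined with the fact that $\frac{\mu}{\sqrt{\tau}}$ is increasing (previous claim), the threshold $\frac{x\mu}{2\sqrt{\tau}}$ is strictly increasing at that instant while $\sqrt{\epsilon}$ is stationary, so $\dot\epsilon$ crosses from negative to positive; such a crossing can happen at most once, so $\epsilon$ is eventually monotone and bounded, hence converges. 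You instead exploit the convergence of the threshold $w^2=\bigl(\tfrac{x\mu}{2\sqrt{\tau}}\bigr)^2\to x_\infty^2/4$ --- legitimate, since the claims $x\to x_\infty$ and $\mu\sim\sqrt{\tau}$ precede this one in the paper --- and exclude oscillation by a $\liminf$/$\limsup$ barrier argument. What each buys: the paper's route does not use the convergence of $x$ at all and yields the extra information that $\epsilon$ is eventually monotone; yours avoids the monotonicity of $\frac{\mu}{\sqrt{\tau}}$ and the explicit $\dot x$ estimate at a critical time, at the price of invoking the prior convergence claim. Two cosmetic points: your interval bookkeeping can be compressed to the one-line barrier statements ``for $t\geq T$, once $\epsilon\leq M_0$ it stays $\leq M_0$'' and ``once $\epsilon\geq L-\eta$ it stays $\geq L-\eta$'' (indeed a maximal excursion whose left endpoint lies strictly past the cutoff time cannot contain any point of the set, since $\epsilon$ would have to be strictly decreasing, resp.\ increasing, away from the boundary value, so the contradiction is immediate); and in your second case the correct conclusion is $\ell\geq L-\eta$ rather than $\ell=L-\eta$, which is still the desired contradiction.
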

\begin{proof}
    We rewrite 
    \[
\frac{\dot{\epsilon}}{\epsilon} = \frac{32}{x\sqrt{\tau}}(\frac{x\mu}{2\sqrt{\tau}}-\sqrt{\epsilon})
\]
After a certain time, $\frac{\mu}{\sqrt{\tau}} \geq \frac{\sqrt{3}}{2}$. If $\dot{\epsilon} = 0$ then $\epsilon \geq \frac{3x^2}{16}$ so 
\[
\frac{\dot{x}}{x} = \frac{\beta^2}{\alpha (1-\epsilon^2)} (16\epsilon + 32 - 3x^2) \geq \frac{32\beta^2}{\alpha (1-\epsilon^2)} >0
\]
So right before, $\dot{\epsilon}<0$ and right after $\dot{\epsilon}>0$. Therefore, it can only happen one time and after that, $\epsilon$ is monotonic and bounded so converges.
\end{proof}
If $x_{\infty}>2$ then after a while $\frac{\mu}{\sqrt{\tau}}-\frac{2\sqrt{\epsilon}}{x} \geq \kappa >0$. So 
\[
\frac{\dot{\epsilon}}{\epsilon} \geq \frac{16\kappa}{\sqrt{\tau}} \geq \frac{cste}{cste + 8\sqrt{\delta}t}
\]
By integrating this expression, we find that $\log{\epsilon} \to +\infty$, contradiction.\\
If $x_{\infty}< 2$, then $\epsilon_{\infty}\neq 1$ because if not, $\epsilon$ should increase so $\frac{x\mu}{2\sqrt{\tau}}\geq \sqrt{\epsilon}$ and therefore we would have $x_{\infty} \geq 2$. Thus, \[
\alpha \sim 2\frac{16-x_{\infty}^2}{1-\epsilon_{\infty}}t
\]
\[
\beta \sim \frac{2}{x_{\infty}}\frac{16-x_{\infty}^2}{1-\epsilon_{\infty}}t
\]
\[
\dot{\beta} \to \frac{1}{x_{\infty}}\frac{x_{\infty}^2 - 16\epsilon_{\infty}}{1-\epsilon_{\infty}}
\]
So by equalizing the growth rates :
\[
3x_{\infty}^2 = 32 + 16\epsilon_{\infty}
\]
Finally, $x_{\infty}\geq \frac{4\sqrt{2}}{\sqrt{3}}>2$, contradiction.\\
If $x_{\infty} = 2$ and $\epsilon_{\infty} \neq 1$, then the previous argument also works. The last case is $x_{\infty}=2$ and $\epsilon_{\infty}=1$. By a straightforward argument $\alpha$ increases and $\beta$ decreases, both converge to non zero constants. But then $\frac{\dot{x}}{x} \sim \frac{cste}{1-\epsilon} \to +\infty$ so $x\to +\infty$, contradiction. 
This finishes the proof of the theorem. $\blacksquare$
\section{Contractibility of the space of positively curved metrics}
Here, we give a proof of the \textbf{Corollary 1.3}, stating that the space of all positively curved metrics $M^G_{\mathrm{Scal} >0}$ is either empty or contractible for any five-dimensional homogeneous space $M=G/H$. In fact, we will prove this stronger statement:
\begin{prop}
    Under the dynamical Alekseevski conjecture, $M^G_{\mathrm{Scal} >0}$ is either empty or contractible for any homogeneous space $M=G/H$.
\end{prop}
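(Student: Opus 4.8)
The plan is to reduce the statement to the case of compact homogeneous spaces, where it is essentially due to Böhm \cite{Boehm2015}, the conjecture entering only to force every homogeneous Ricci flow on $M$ to have a finite-time singularity. Write $M=G/H$; the space $M^G$ of $G$-invariant metrics is an open convex cone inside the finite-dimensional vector space of $\mathrm{Ad}(H)$-invariant symmetric bilinear forms on $\mathfrak g/\mathfrak h$, hence contractible. Since $\mathrm{Scal}(cg)=c^{-1}\mathrm{Scal}(g)$ for $c>0$, the subset $M^G_{\mathrm{Scal}>0}$ is invariant under rescaling, and the rescaling homotopy $H_s(g)=\mathrm{vol}(g)^{-2s/n}g$ (volume measured at a fixed base point) strongly deformation retracts it onto its unit-volume slice $\mathcal M_1$. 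So it suffices to show that $\mathcal M_1$ is empty or contractible.

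The role of the hypothesis is the following. On a homogeneous space $\mathrm{Scal}$ is spatially constant, so along any Ricci flow $g(t)$ one has $\tfrac{d}{dt}\mathrm{Scal}(g(t))=2\,|\mathrm{Ric}(g(t))|^2_{g(t)}\ge\tfrac2n\,\mathrm{Scal}(g(t))^2$ by Cauchy--Schwarz; if $\mathrm{Scal}(g(0))>0$ this forces a finite-time singularity no later than $\tfrac n{2\mathrm{Scal}(g(0))}$. Hence, if $M^G_{\mathrm{Scal}>0}\ne\varnothing$ then $M$ carries a non-immortal homogeneous Ricci flow, and Lafuente's theorem \cite{Lafuente2015} gives $\tilde M\not\cong\mathbb R^n$ (in particular $M^G_{\mathrm{Scal}>0}=\varnothing$ whenever $\tilde M\cong\mathbb R^n$, so that case is vacuous). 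Assuming now $\tilde M\not\cong\mathbb R^n$, the dynamical Alekseevski conjecture for $M$ says that no homogeneous Ricci flow on $M$ is immortal, i.e. all of them have finite-time extinction --- precisely the situation of a compact homogeneous space, and the input needed to run Böhm's argument.

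The contraction of $\mathcal M_1$ is then built from the volume-normalized Ricci flow $\partial_t g=-2\,\mathrm{Ric}(g)+\tfrac2n\,\mathrm{Scal}(g)\,g$ on $\mathcal M_1$, which on the space of invariant unit-volume metrics is, up to rescaling time, the gradient flow of the functional $g\mapsto\mathrm{Scal}(g)$. It preserves $\{\mathrm{Scal}>0\}$ because $\tfrac{d}{dt}\mathrm{Scal}=2\,|\mathrm{Ric}-\tfrac1n\mathrm{Scal}\,g|^2\ge0$, with equality only at Einstein metrics, and --- since the unnormalized flow has finite-time extinction --- the rescaled flow runs for all forward time; thus $\mathrm{Scal}$ is a strict Lyapunov function for an immortal flow on $\mathcal M_1$. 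The strategy is to use this, together with the long-time classification of homogeneous Ricci flows, to deform $\mathcal M_1$ into the region where $\mathrm{Scal}$ is large and, passing to the open superlevel sets $\mathcal M_1^{>c}:=\{g\in\mathcal M_1:\mathrm{Scal}(g)>c\}$, to conclude: once each $\mathcal M_1^{>c}$ ($c>0$) is shown contractible, $\mathcal M_1=\bigcup_{c>0}\mathcal M_1^{>c}$ is contractible, since any compact subset of $\mathcal M_1$ (on which $\mathrm{Scal}$ is bounded below by a positive constant) lies in one of them, so every map $S^k\to\mathcal M_1$ is null-homotopic and $\mathcal M_1$, being a manifold hence of CW type, is contractible.

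The main obstacle is that this deformation cannot be carried out naively. The Lyapunov flow retracts $\mathcal M_1^{>c}$ onto (a neighbourhood of) its maximal invariant subset, which consists of invariant Einstein metrics --- but this set need not be a point, or even nonempty: Einstein metrics with $\mathrm{Scal}>0$ can be dynamically unstable (the Killing metric on a full flag manifold is the standard example), so distinct flow lines may limit onto distinct Einstein metrics, and the flow may fail to converge at all, collapsing a homogeneous fibration (as for $S^2\times\mathbb R^3$, which has no invariant Einstein metric whatsoever while $\mathcal M_1$ is still contractible). Pinning these long-time behaviors down, and upgrading the flow into an honest continuous retraction of the superlevel sets, is exactly the delicate part of Böhm's analysis \cite{Boehm2015}, which I would import; in practice I also expect to be able to verify contractibility of $\mathcal M_1^{>c}$ directly from the explicit formula for $\mathrm{Scal}$ on invariant metrics --- including the cross-terms, computed as in Sections 3 and 4 --- ideally by exhibiting star-shapedness with respect to a suitably normalized metric. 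This superlevel-set analysis is where the real work lies; once it is in place, Corollary 1.3 follows by combining the Proposition with Theorem 1.2.
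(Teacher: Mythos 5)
Your reduction steps are fine (the emptiness of $M^G_{\mathrm{Scal}>0}$ when $\tilde M\cong\mathbb{R}^n$, the observation that $\partial_t\mathrm{Scal}\ge\tfrac2n\mathrm{Scal}^2$ forces finite-time extinction from positive scalar curvature, and the contractibility of $M^G$ as a convex cone), but the core of the argument is missing. The contractibility of the unit-volume superlevel sets $\mathcal M_1^{>c}$ is exactly the statement to be proved, and you defer it to ``importing'' Böhm's analysis \cite{Boehm2015} or to an unverified star-shapedness claim. That import is not justified: \cite{Boehm2015} concerns \emph{compact} homogeneous spaces, whereas the spaces at stake here (e.g.\ $SL(2,\mathbb{C})/U(1)$ and $SO(3)\ltimes\mathbb{R}^3/SO(2)$) are non-compact, and for non-compact $G/H$ the behaviour of the volume-normalized flow on $\mathcal M_1$ --- even its immortality, which you assert follows from finite-time extinction of the unnormalized flow, and a fortiori the structure of its limit set when no invariant Einstein metric exists --- is precisely the hard dynamical input you would have to establish. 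You acknowledge this yourself (``this is where the real work lies''), so as written the proposal is a programme, not a proof.

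The paper avoids all of this with a much lighter device: no volume normalization, no Lyapunov/Einstein analysis. Under the conjecture every homogeneous flow has finite extinction time, hence (by the blow-up of scalar curvature at a homogeneous finite-time singularity, via \cite{Lafuente2015}) every flow line reaches $\mathrm{Scal}\ge 1$; flowing each metric $g$ for time $\alpha\, t_g$, where $t_g$ is the first hitting time of $\{\mathrm{Scal}\ge1\}$, gives a deformation retraction of the contractible cone $M^G$ onto $M^G_{\mathrm{Scal}\ge1}$, and the same hitting-time homotopy (which needs no conjecture, only the ODE for $\mathrm{Scal}$) retracts $M^G_{\mathrm{Scal}>0}$ onto $M^G_{\mathrm{Scal}\ge1}$; contractibility follows at once. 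If you want to salvage your route, you would either have to prove the long-time statements about the normalized flow on non-compact homogeneous spaces yourself, or switch to a hitting-time retraction of the unnormalized flow as above, which sidesteps the superlevel-set analysis entirely (modulo the continuity of $g\mapsto t_g$, which follows from strict monotonicity of $\mathrm{Scal}$ along the flow and continuous dependence on initial data).
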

\begin{proof}
    Let $M$ be a homogeneous space. From the work of Berard-Bergery \cite{berardbergery1978courbure}, if the universal cover of $M$ is diffeomorphic to $\mathbb{R}^n$, then $M^G_{\mathrm{Scal} >0}$ is empty. If not, then by the dynamical Alekseevski conjecture all homogeneous Ricci flows have a finite time extinction. Therefore, using \cite{Lafuente2015}, for every homogeneous metric $g$, the Ricci flow $g(t)$ starting from $g$ reaches a metric with a scalar curvature greater than $1$. Let's define $\mathrm{Scal}(g)$ the scalar curvature of a metric and
    \[
    t_g = \mathrm{inf} \{ s\in \mathbb{R} \mid \mathrm{Scal}(g(s))\geq 1\}.
    \]
    Then,
    \[
    \begin{array}{rcl}
f : & M^G & \longrightarrow M^G_{\mathrm{Scal} \geq 1} \\
    & g & \longmapsto g(t_g)
\end{array}.
    \]
    is continuous because of the continuity of the Ricci flow in its parameters. Moreover, it's a deformation retractation as shown by the following homotopy from $\mathrm{Id}$ to $f$:
     \[
    \begin{array}{rcl}
H : & M^G\times [0,1] & \longrightarrow M^G \\
    & (g,\alpha) & \longmapsto g(\alpha \cdot t_g)
\end{array}.
    \]
    Therefore $M^G_{\mathrm{Scal}\geq 1}$ is homotopy equivalent to $M^G$, which is homotopy equivalent to a point. Besides, the exact same argument shows that $M^G_{\mathrm{Scal}>0}$ and $M^G_{\mathrm{Scal}\geq 1}$ are homotopy equivalent without even using the conjecture. Finally, $M^G_{\mathrm{Scal}>0}$ is contractible.
\end{proof}

\nocite{*}
\bibliographystyle{plain}
\bibliography{refs}
\end{document}